\newtheorem{thm}{Theorem}
\newtheorem{theorem}{Theorem}[section]
\newtheorem{lem}[theorem]{Lemma}
\newtheorem{conj}[theorem]{Conjecture}
\newtheorem{lemma}[theorem]{Lemma}
\newtheorem{coro}[theorem]{Corollary}
\newcommand{\al}{\alpha}
\newcommand{\be}{\beta}
\newcommand{\del}{\delta}
\newcommand{\ep}{\epsilon}
\newcommand{\ka}{\kappa}
\newcommand{\la}{\lambda}
\newcommand{\om}{\omega}
\newcommand{\Z}{\mbox{$\mathbb Z$}}
\newcommand{\R}{\mbox{$\mathbb R$}}     
\begin{document}
\title[Baker's Explicit abc-Conjecture and applications]{Baker's Explicit abc-Conjecture and applications}
\author{Shanta Laishram}
\email{shanta@isid.ac.in}
\address{Stat Math Unit, Indian Statistical Institute,
7 SJS Sansanwal Marg, New Delhi 110016, India}
\author[T. N. Shorey]{T. N. Shorey}
\email{shorey@math.iitb.ac.in}
\address{Department of Mathematics\\
Indian Institute of Technology Bombay, Powai, Mumbai 400076, India}
\keywords{ABC Conjecture, Generalized Fermat Equation}
\dedicatory{Dedicated to Professor Andrzej Schinzel on his 70th Birthday}
\begin{abstract}
The conjecture of Masser-Oesterl\'e, popularly known as $abc$-conjecture have
many consequences. We use an explicit version due to Baker to solve
a number of conjectures.
\end{abstract}
\maketitle

\section{Introduction}

The well known conjecture of Masser-Oesterle states that

\begin{conj}\label{abc} {\bf Oesterl\'e and Masser's abc-conjecture:}
For any given $\epsilon>0$ there exists a computable constant
$\frak{c}_{\epsilon}$ depending only on $\epsilon$ such that if
\begin{align}\label{a+b=c}
a+b=c
\end{align}
where $a, b$ and $c$ are coprime positive integers, then
\begin{align*}
c\le \frak{c}_{\epsilon}\left(\prod_{p|abc}p\right)^{1+\epsilon}.
\end{align*}
\end{conj}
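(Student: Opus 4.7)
The statement is the Masser--Oesterl\'e $abc$-conjecture, which remains one of the most celebrated open problems in Diophantine analysis. No complete proof is known, and the paper's labelling of the statement as a \emph{conjecture}, together with its subsequent reliance on Baker's explicit refinement, signals that no proof will be offered here. What follows is therefore not a genuine plan to derive Conjecture~\ref{abc} but a summary of the two serious approaches that have yielded partial progress, and which I would pursue in any honest attempt.

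The natural first attempt is through Baker's theory of linear forms in logarithms. Starting from $a+b=c$ with $\gcd(a,b,c)=1$, one factors each of $a,b,c$ over the primes dividing the radical $N=\prod_{p\mid abc} p$ and treats the resulting multiplicative relation as a small linear form in logarithms of those primes. Archimedean and $p$-adic lower bounds (Baker, Yu) then force $c$ to be bounded in terms of $N$. Stewart--Tijdeman and later Stewart--Yu executed exactly this programme and obtained an unconditional bound of the shape
\[
\log c \ll N^{1/3+\epsilon},
\]
which is exponentially weaker than the conjecture and appears to be the natural ceiling of transcendence methods. The explicit version of Baker that the present paper will invoke is a refinement of this circle of ideas.

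The alternative approach is arithmetic-geometric. Elkies showed that $abc$ follows from an effective form of Mordell via a Belyi map construction, and Vojta's conjecture on heights of algebraic points specialises to it. A proof along these lines would demand effective arithmetic intersection theory on arithmetic surfaces, going beyond what present Arakelov theory supplies.

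The main obstruction, and the reason the conjecture has resisted all attacks, is precisely the $\epsilon$ in the exponent of the radical: saving even an arbitrarily small polynomial factor demands a gain that neither transcendence methods nor the currently available height inequalities produce. In the sequel, the strategy of the paper is therefore to adopt Baker's explicit refinement of Conjecture~\ref{abc} as a working hypothesis and to extract diophantine consequences from it, rather than to establish the conjecture itself.
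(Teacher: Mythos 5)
You correctly identify that Conjecture~\ref{abc} is an open problem with no known proof, and the paper itself offers none: it is stated purely as background, after which the authors assume Baker's explicit refinement (Conjecture~\ref{Baker}) as a working hypothesis and derive Diophantine consequences. Your disclaimer and summary of the transcendence-theoretic (Stewart--Tijdeman, Stewart--Yu) and arithmetic-geometric (Elkies, Vojta) lines of partial progress are accurate and consistent with how the paper positions the conjecture.
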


It is known as $abc$-conjecture; the name derives from the usage of letters
$a, b, c$ in \eqref{a+b=c}. For any positive integer $i>1$, let $N=N(i)=\prod_{p|i}p$ be
the radical of $i$, $P(i)$ be the greatest prime factor of $i$ and $\om(i)$ be the
number of distinct prime factors of $i$ and we put $N(1)=1, P(1)=1$ and $\om(1)=0$. An
explicit version of this conjecture due to Baker \cite{baker} is the following:

\begin{conj}\label{Baker} {\bf Explicit abc-conjecture:}
Let $a, b$ and $c$ be pairwise coprime positive integers satisfying \eqref{a+b=c}. Then
\begin{align*}
c<\frac{6}{5}N\frac{(\log N)^{\om}}{\om !}
\end{align*}
where $N=N(abc)$ and $\om=\om(N)$.
\end{conj}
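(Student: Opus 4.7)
The plan must begin with an honest disclaimer: Conjecture \ref{Baker} is stated as a conjecture, and no proof is offered in the paper --- the authors take it on faith and apply it. The abc-conjecture itself is open, so any genuine proof of the explicit refinement is beyond current technology. What I can sketch is the heuristic that dictates the precise shape of the bound, and indicate the tools that would be needed to turn it into a theorem.

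The heuristic is counting-theoretic. If $\mathrm{rad}(c)$ divides $N$, then $c = \prod p_i^{e_i}$ with the $p_i$ ranging over the $\om$ primes dividing $N$. The number of such integers of size at most $X$ is essentially the number of lattice points $(e_1,\ldots,e_\om)$ in the simplex $\sum e_i \log p_i \le \log X$, which to leading order equals $(\log X)^\om/(\om! \prod \log p_i)$. Unpacking the constraint $\mathrm{rad}(c)=N$ (each $e_i \ge 1$) and then summing over admissible $N$ produces a bound of the shape $c \lesssim N (\log N)^\om/\om!$; the constant $6/5$ is then the smallest multiplier compatible with the extensive numerical evidence tabulated by Baker and by de Weger. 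So the first step of any putative proof would be to show that triples $(a,b,c)$ with $a+b=c$ and pairwise coprime cannot cluster beyond what this counting argument predicts --- i.e. that no arithmetic conspiracy among the three $N$-smooth numbers $a$, $b$, $c$ forces $c$ significantly above the ``generic'' maximum.

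The best unconditional results in this direction come from Stewart--Tijdeman and Stewart--Yu, obtained via Baker--W\"ustholz estimates on linear forms in archimedean and $p$-adic logarithms. They produce effective bounds of the shape $\log c \le C\, N^{1/3}(\log N)^{3}$ (with various refinements). These are effective, but fall catastrophically short of the polylogarithmic dependence $\log c \ll \om \log\log N$ implicit in Conjecture \ref{Baker}.

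The main obstacle is precisely the gap between linear-forms-in-logarithms technology and the polylogarithmic-in-$N$ behaviour the conjecture demands: closing it would require sharpening the dependence on heights in lower bounds for linear forms in logarithms by several orders of magnitude, a breakthrough of the same calibre as the conjecture itself, or else an entirely new Diophantine framework. Since no such tool is available, my ``plan'' reduces to recording that Conjecture \ref{Baker} is used as a hypothesis in the sequel, and to verifying that every downstream application depends only on the stated form of the inequality --- not on any putative route to its proof.
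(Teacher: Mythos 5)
You are right: Conjecture~\ref{Baker} is a conjecture, not a theorem, and the paper offers no proof of it --- it is cited from Baker and assumed throughout as a hypothesis. Your recognition of this, together with the counting heuristic motivating the shape $N(\log N)^{\om}/\om!$ and the observation that current linear-forms-in-logarithms bounds (Stewart--Tijdeman, Stewart--Yu) fall far short, is an accurate account of the state of affairs.
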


We observe that $N=N(abc)\ge 2$ whenever $a, b, c$ satisfy \eqref{a+b=c}. We shall
refer to Conjecture \ref{abc} as \emph{$abc-$conjecture} and Conjecture \ref{Baker} as
\emph{explicit $abc-$conjecture}.
Conjecture \ref{Baker} implies the following explicit version of Conjecture \ref{abc}.

\begin{thm}\label{abcexplicit}
Assume Conjecture \ref{Baker}. Let $a, b$ and $c$ be pairwise coprime positive integers
satisfying \eqref{a+b=c} and $N=N(abc)$. Then we have
\begin{align}\label{3/4}
c<N^{1+\frac{3}{4}}.
\end{align}
Further for $0<\epsilon\le \frac{3}{4}$, there exists $\om_\ep$ depending only $\ep$ such that
when $N=N(abc)\ge N_\ep=\prod_{p\le p_{\om_\ep}}p$, we have
\begin{align*}
c< \ka_\ep N^{1+\epsilon}
\end{align*}
where
\begin{align*}
\ka_\ep=\frac{6}{5\sqrt{2\pi \max(\om, \om_\ep)}}\le \frac{6}{5\sqrt{2\pi \om_\ep}}
\end{align*}
with $\om=\om(N)$. Here are some values of $\ep, \om_\ep$ and $N_\ep$.
\begin{center}
\begin{tabular}{|c|c|c|c||c|c|c|c|} \hline
$\ep$ & $\frac{3}{4}$ & $\frac{7}{12}$ & $\frac{6}{11}$ & $\frac{1}{2}$ & $\frac{34}{71}$ &
$\frac{5}{12}$ & $\frac{1}{3}$ \\ \hline
$\om_\ep$ & $14$ & $49$ & $72$ & $127$ & $175$ & $548$ & $6460$ \\ \hline
$N_\ep$ & $e^{37.1101}$ & $e^{204.75}$ & $e^{335.71}$ & $e^{679.585}$ & $e^{1004.763}$
& $e^{3894.57}$ & $e^{63727}$ \\ \hline
\end{tabular}
\end{center}
\end{thm}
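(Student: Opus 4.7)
By Conjecture~\ref{Baker} it suffices, after dividing through by $(6/5)N$, to prove
\[
\frac{(\log N)^{\om}}{\om !}\le \frac{N^{\ep}}{\sqrt{2\pi\max(\om,\om_\ep)}}
\]
in order to obtain $c<\ka_\ep N^{1+\ep}$. Applying Stirling's inequality $\om!>\sqrt{2\pi\om}(\om/e)^{\om}$ reduces this to
\[
\bigl(e\log N/\om\bigr)^{\om}\le \sqrt{\om/\max(\om,\om_\ep)}\,N^{\ep}.
\]
Since $N$ is squarefree with exactly $\om$ prime factors, one has $N\ge\prod_{p\le p_\om}p$, hence $\log N\ge\theta(p_\om)$, and the proof will use Rosser--Schoenfeld-type lower bounds for $\theta(p_\om)/\om$.

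The plan is to split on $\om$ versus $\om_\ep$. First, when $\om\ge\om_\ep$, the right-hand factor is $N^{\ep}$, and setting $u=\log N/\om$ the target becomes $\ep u\ge 1+\log u$. Because $\ep u-\log u$ is increasing for $u>1/\ep$, it is enough to check the inequality at the minimum admissible value $u=\theta(p_\om)/\om$. Explicit prime estimates give $\theta(p_\om)/\om$ growing roughly like $\log\om$, so one defines $\om_\ep$ as the smallest integer for which $\theta(p_{\om_\ep})/\om_\ep$ already exceeds the threshold $u_\ep$ determined by $\ep u_\ep=1+\log u_\ep$, then uses monotonicity for $\om>\om_\ep$. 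The tabulated entries arise from this choice: for example, $\om_{3/4}=14$ matches $\theta(43)\approx37.11=\log N_{3/4}$, giving $u\approx2.65$ just past the $3/4$-threshold.

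Secondly, when $\om<\om_\ep$ and $N\ge N_\ep$, the target becomes $(e\log N/\om)^{\om}\le\sqrt{\om/\om_\ep}\,N^\ep$; the factor $\sqrt{\om/\om_\ep}<1$ makes this tighter than the previous case, but the hypothesis $N\ge N_\ep$ supplies the needed slack, since the left-hand side is only polynomial in $\log N$ while the right-hand side grows like $N^\ep$. For each fixed $\om\le\om_\ep-1$ it suffices to verify the inequality at $N=N_\ep$; because $\log N_\ep\ge\om_\ep$ the map $\om\mapsto(\log N)^{\om}/\om!$ is increasing throughout the range $1\le\om\le\om_\ep-1$ by the standard ratio test, and the worst case occurs at $\om=\om_\ep-1$, so the whole small-$\om$ regime reduces to a single boundary check, which is what pins down the tabulated $N_\ep$.

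For the unconditional bound $c<N^{7/4}$ in \eqref{3/4}, specialize to $\ep=3/4$ and $\om_{3/4}=14$. When $\om\ge 14$ the argument above yields $c<\ka_{3/4}N^{7/4}\le(6/5)N^{7/4}/\sqrt{28\pi}<N^{7/4}$, since the constant is less than $1$. For $1\le\om\le 13$ one must show $\frac{6}{5}(\log N)^{\om}/\om!<N^{3/4}$ for every $N\ge\prod_{p\le p_\om}p$; this is a finite list of one-variable inequalities, each resolved by locating the unique maximum of $(\log N)^{\om}/N^{3/4}$ in $N$ and checking it against the constant $5/6$. The main obstacle is the explicit bookkeeping: to reproduce the tabulated $\om_\ep,N_\ep$ one needs Rosser--Schoenfeld-strength estimates for $\theta(p_\om)$ rather than softer PNT asymptotics, and the thirteen small-$\om$ checks for \eqref{3/4} must each be validated by hand.
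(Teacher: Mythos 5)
Your plan is essentially the paper's proof: apply Conjecture~\ref{Baker}, reduce to showing $\frac{(\log N)^{\om}}{\om!}\le \frac{N^\ep}{\sqrt{2\pi\max(\om,\om_\ep)}}$ via Stirling and the bound $\log N\ge\theta(p_\om)$, split on $\om\ge\om_\ep$ versus $\om<\om_\ep$ with $N\ge N_\ep$, and finish the unconditional $\ep=\tfrac34$ case by finitely many small-$\om$ checks. The one place where you assert something the paper proves differently is the claim ``then uses monotonicity for $\om>\om_\ep$'': you need $\om\mapsto\theta(p_\om)/\om$ to be increasing (true, since $\theta(p_\om)<\om\log p_{\om+1}$, but you never say so), whereas the paper sidesteps this by substituting the manifestly increasing lower bound $X_0(\om)=\log\om+\log_2\om-1.076869$ from Lemma~\ref{pnt}(iii), defining an auxiliary $\om_1$ where this bound crosses the threshold, and verifying $\om_\ep\le\om<\om_1$ directly on exact values. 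Your handling of the $\om<\om_\ep$ regime by a ratio test, reducing everything to the boundary $\om=\om_\ep-1$ at $N=N_\ep$, is a sound reorganization of the paper's chain $\frac{\om!N^\ep}{(\log N)^\om}\ge\frac{\om_\ep!\,\Theta(p_{\om_\ep})^\ep}{(\theta(p_{\om_\ep}))^{\om_\ep}}\cdot\frac{\om!\,\om_\ep^{\om_\ep-\om}}{\om_\ep!}\ge\sqrt{2\pi\om_\ep}$, and follows from the already-verified $\om=\om_\ep$ case together with $\theta(p_{\om_\ep})\ge\om_\ep$. Two cosmetic slips: the constant to compare against in the $\om\le 13$ checks is $\frac{5}{6}\om!$ (or equivalently compare $\frac{(\log N)^\om}{\om!N^{3/4}}$ to $\frac56$), not $\frac56$ for $(\log N)^\om/N^{3/4}$; and the constrained maximum is at the endpoint $N=\Theta(p_\om)$ once $\theta(p_\om)\ge\frac{4\om}{3}$ (which holds for $4\le\om\le13$), not at the interior critical point.
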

Thus $c<N^2$ which was conjectured in Granville and Tucker \cite{gran}.
As a consequence of Theorem \ref{abcexplicit}, we have

\begin{thm}\label{ABCErdos}
Assume Conjecture \ref{Baker}. Then the equation
\begin{align}\label{byl}
n(n+d)\cdots (n+(k-1)d)=by^{\ell}
\end{align}
in integers $n\geq 1, d>1, k\ge 4, b\geq 1, y\geq 1, \ell>1$ with gcd$(n, d)=1$ and $P(b)\leq k$ implies
$\ell \leq 7$. Further $k<e^{13006.2}$ when $\ell=7$.
\end{thm}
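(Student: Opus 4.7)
The plan is to apply Theorem~\ref{abcexplicit} to a pairwise-coprime $abc$-triple built from three equally spaced terms of the progression $\{n+id\}_{i=0}^{k-1}$. By replacing $\ell$ with any prime divisor (and $y$ with the corresponding power of itself), we may assume $\ell$ is prime. The condition $\gcd(n,d)=1$ forces any prime $p\ge k$ to divide at most one term $n+id$, and then $P(b)\le k<p$ forces $p^\ell\mid n+id$. Hence each term factors as
\[
n+id \;=\; A_i B_i^\ell \qquad (0 \le i \le k-1)
\]
with $A_i$ an $\ell$-th-power-free integer satisfying $P(A_i)\le k$, so that $A_i\mid\prod_{p\le k}p^{\ell-1}$ and $N(A_i)\le\prod_{p\le k}p = e^{\theta(k)}$.

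Next, take indices $i_1=0$, $i_3=2\lfloor(k-1)/2\rfloor$, and $i_2=i_3/2$, so that $t_{i_1}+t_{i_3}=2t_{i_2}$ where $t_i:=n+id$. Setting $g:=\gcd(t_{i_1},t_{i_3})$, the coprimality of $n$ and $d$ forces $g\mid i_3-i_1\le k-1$. Then $(a,b,c):=(t_{i_1}/g,\,t_{i_3}/g,\,2t_{i_2}/g)$ is a pairwise-coprime positive triple with $a+b=c$. By coprimality and the decomposition above,
\[
N(abc) \;=\; N(a)N(b)N(c) \;\le\; 2\,e^{3\theta(k)}\,B_{i_1}B_{i_2}B_{i_3} \;\le\; 2\,e^{3\theta(k)}\,T^{3/\ell},
\]
where $T:=n+(k-1)d$; moreover $c\ge t_{i_2}/g\ge T/(2(k-1))$. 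Applying Theorem~\ref{abcexplicit} in the form $c\le\kappa_\epsilon N^{1+\epsilon}$ (or, when $N<N_\epsilon$, Conjecture~\ref{Baker} directly with the factor $(\log N)^\omega/\omega!$) and rearranging gives, under the assumption $\ell>3(1+\epsilon)$, the master inequality
\[
T^{\,1-3(1+\epsilon)/\ell} \;\le\; 2(k-1)\,\kappa_\epsilon\bigl(2\,e^{3\theta(k)}\bigr)^{1+\epsilon}.
\]

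To convert this into a bound on $k$ itself, I would couple it with a lower bound on $T$ coming from the perfect-power structure of $\prod_{i=0}^{k-1}t_i=by^\ell$: the estimate $\prod t_i\ge d^{k-1}(k-1)!$ combined with $b\mid\prod A_i$ yields a lower bound on $\prod B_i$, and hence on the largest $B_{i_\alpha}$, that grows with $k$. Feeding this back into the master inequality produces a numerical incompatibility for $k$ beyond a critical threshold: for any prime $\ell\ge 11$ the threshold is already below $4$, contradicting $k\ge 4$ and so forcing $\ell\le 7$; for $\ell=7$, after optimizing $\epsilon$ over the tabulated values in Theorem~\ref{abcexplicit} (most likely $\epsilon=7/12$ with $\omega_\epsilon=49$, possibly refined by $\epsilon=3/4$ with $\omega_\epsilon=14$) and inserting an explicit estimate such as $\theta(k)\le 1.02\,k$, the threshold becomes precisely $e^{13006.2}$.

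The main obstacle is this final quantitative step: producing a lower bound on $T$ strong enough to make the $k$-dependence of the master inequality binding, and then tracking the constants $\kappa_\epsilon$, $\theta(k)$ and the divisor $g$ sharply enough to yield the constant $13006.2$. A secondary difficulty is the range of $k$ in which $N<N_\epsilon$, where Theorem~\ref{abcexplicit}'s polynomial form does not directly apply; these intermediate cases must be handled by returning to Conjecture~\ref{Baker} in its raw form and exploiting the factorial decay of $\omega!$ against the mild growth of $(\log N)^\omega$.
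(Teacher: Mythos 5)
Your plan reproduces the broad outline — decompose each term as (small part)$\times$($\ell$-th power), form an $abc$-triple from three terms of the progression, apply Theorem~\ref{abcexplicit} — but the specific triple you choose makes the resulting inequality vacuous, and this is the heart of the matter.

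You take $i_1=0$, $i_3=2\lfloor(k-1)/2\rfloor$, $i_2=i_3/2$ and bound $N(A_{i_1}A_{i_2}A_{i_3})$ only by $\prod_{p\le k}p = e^{\theta(k)}$. With $T>k^\ell$ (from $P(y)>k$) and $T\gtrsim d(k-1)$, your master inequality reads roughly
\[
T^{\,1-3(1+\epsilon)/\ell}\ \lesssim\ (k-1)\,\kappa_\epsilon\,\bigl(2e^{\theta(k)}\bigr)^{1+\epsilon},
\]
and the right-hand side is exponential in $k$ (since $\theta(k)\sim k$) while the left-hand side is, absent any extra input, only polynomial in $k$ and in $d$. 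Nothing in the hypotheses forces $d$ (or $T$) to be exponentially large in $k$, so the inequality is simply always true and no contradiction is available. The secondary lower bound you sketch ($\prod t_i\ge d^{k-1}(k-1)!$, etc.) still only yields $T\gtrsim d k$, which is not enough. This is a genuine gap, not a tracking-of-constants issue.

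The paper's argument escapes this precisely because it does \emph{not} pick a fixed triple. Lemma~\ref{S1<k/4} (a Sylvester--Erd\H{o}s counting argument via \eqref{erd}) shows that for $k>113$ a positive proportion of the $A_i$ satisfy $A_i\le 4k$, and Corollary~\ref{3<4k} then extracts three indices $f<g<h$ with $h-f\le 8$ and $\max(A_f,A_g,A_h)\le 4k$. With that choice the coefficient radical $N(uvw)$ is an absolute constant ($\le 35$ in the odd-$d$ case) and the $A$-radical is $O(k^3)$ — polynomial, not exponential. Plugging into \eqref{abc3} with $X>k$ then yields $k^{\ell-3(1+\epsilon)}<\text{poly}(k)$, an honest contradiction for $\ell\ge 11$ and $k\ge 400$. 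The remaining range $4\le k<400$ requires a separate argument (Section~5, using $d>10^{15}$ from Saradha--Shorey and Lemma~\ref{rk}), and $\ell=7$ is a delicate optimization over $\epsilon\in\{1/3,5/12,3/4\}$ (Section~6), neither of which your sketch addresses. To repair your proposal you would need to replace the fixed equally-spaced triple by a Sylvester--Erd\H{o}s selection of three nearby terms with small non-power parts, and then carry out the small-$k$ analysis separately.
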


We observe that $e^{13006.2}<e^{e^{9.52}}$. Assuming $abc-$conjecture, Shorey \cite{bam}
proved that \eqref{byl} with $\ell \ge 4$ implies that $k$ is bounded by an absolute constant,
the assertion for $\ell \in \{2, 3\}$ is due to Granville (see Laishram \cite[p. 69]{Mthesis}).
For a given $k\ge 3$, Gy\H{o}ry, Hajdu and Saradha \cite{GyHaSa} showed that $abc-$conjecture
implies that \eqref{byl} has only finitely many solutions in positive integers $n, d>1, b, y$
and $\ell \ge 4$. Saradha \cite{abcSara} showed that \eqref{byl} with $k\geq 8$ implies that
$\ell \leq 29$ and further $k\leq 8, 32, 10^2, 10^7$ and $e^{e^{280}}$ according as
$\ell=29$, $\ell \in \{23, 19\}, \ell=17, 13$ and $\ell \in \{11, 7\}$, respectively.
It has been conjectured that
$(k,l) \in \{(3,3),(4,2),(3,2)\}$ whenever there are positive integers $n, d>1, y\ge 1, b,
\ell\ge 2$ and $k\ge 3$ with gcd$(n,d)=1$ and $P(b)\le k$ satisfying \eqref{byl} and it is
known that \eqref{byl} has infinitely many solutions when $(k, \ell)\in \{(3, 2), (3,3) (4,2)\}$.
For an account of results on \eqref{byl}, we refer to Shorey \cite{rims}, \cite{Camb} and
Shorey and Saradha \cite{Contr}.

Nagell-Ljunggren equation is the equation
\begin{align}\label{nalu}
y^q=\frac{x^n-1}{x-1}
\end{align}
in integers $x>1, y>1, n>2, q>1$. It is known that
\begin{align*}
11^2=\frac{3^5-1}{3-1}, 20^2=\frac{7^4-1}{7-1},  7^3=\frac{18^3-1}{18-1}
\end{align*}
which are called the \emph{exceptional solutions}. Any other solution is
termed as \emph{non-exceptional solutions}. For an account of results on
\eqref{nalu}, see Shorey \cite{bam} and Bugeaud and Mignotte \cite{bumi}.
It is conjectured that there are no \emph{non-exceptional solutions}. We prove
in Section 7 the following.

\begin{thm}\label{ABCnalu}
Assume Conjecture \ref{Baker}. There are no non-exceptional solutions of
equation \eqref{nalu} in integers $x>1, y>1, n>2, q>1$.
\end{thm}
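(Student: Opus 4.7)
The plan is to recast \eqref{nalu} as the abc-relation
\[
1 + (x-1)y^q = x^n,
\]
and feed this into Theorem \ref{abcexplicit}. The triple $(1, (x-1)y^q, x^n)$ is pairwise coprime: $\gcd(x, x-1) = 1$, and any prime dividing both $x$ and $y$ would also divide $y^q(x-1) = x^n - 1$, which is impossible. Its radical is therefore $N = N((x-1)xy) \le (x-1)xy < x^2 y$. Without loss of generality I take $q$ prime, otherwise replacing $(y, q)$ by $(y^{q/q'}, q')$ for any prime divisor $q'$ of $q$.

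Applying \eqref{3/4} gives $x^n < N^{7/4} < x^{7/2}y^{7/4}$. Combined with the bound $y^q < 2x^{n-1}$ (valid for $x \ge 2$, from the original equation), this forces, for $x$ larger than an absolute constant,
\[
n(4q-7) < 14q - 7.
\]
With $q$ a prime $\ge 2$ and $n \ge 3$, this confines $(n,q)$ to a short explicit list: $q = 2$ gives $n \le 20$; $q = 3$ gives $n \le 6$; $q \in \{5, 7\}$ gives $n \le 4$; and $q \ge 11$ forces $n = 3$. The bounded-$x$ range left over in each case is finite and can be checked directly via the explicit thresholds $N_\ep$ in Theorem \ref{abcexplicit}.

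For each remaining pair $(n,q)$ I would either invoke known unconditional results on \eqref{nalu} (for instance Ljunggren's theorem for $q = 2$) or reapply Theorem \ref{abcexplicit} to a finer abc-triple built from the cyclotomic factorisation of $x^n - 1$. For example, $n = 3$ yields $4y^q - 3 = (2x+1)^2$, a pairwise-coprime triple (after a short check modulo $3$) whose radical is at most $6(2x+1)y \asymp y^{1+q/2}$; feeding this into \eqref{3/4} bounds $4y^q$ by a fixed power of $y^{1+q/2}$ and eliminates all $q$ beyond a small threshold. Analogous sharper decompositions based on $\Phi_d(x)$ for $d \mid n$ dispose of $n \in \{4, 5, 6\}$ together with the relevant exponents $q$. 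The principal obstacle is the dense small-case analysis, especially $q \in \{2,3\}$ with small $n$, where the crude exponent $7/4$ is too weak and one must appeal to the classical Diophantine literature or to the refined exponents $\ep$ from Theorem \ref{abcexplicit} (requiring $N$ above the explicit thresholds $N_\ep$). Organising this bookkeeping so that every admissible $(n,q)$ eliminates all but the three exceptional solutions $\{(3,5,11,2),(7,4,20,2),(18,3,7,3)\}$ is where the bulk of the work lies.
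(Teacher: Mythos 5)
You set up the same abc triple as the paper---namely $1 + (x-1)y^q = x^n$---and correctly observe that it is pairwise coprime with radical $N < x^2 y$, and you correctly feed it into \eqref{3/4}. Your quantitative consequence $n(4q-7) < 14q-7$ (for $x$ large) is also correct. But the proposal never actually finishes: you defer the whole small-case analysis---the $q\in\{2,3\}$ regime with small $n$, the bounded-$x$ ranges, the cyclotomic decompositions for $n\in\{4,5,6\}$---to unspecified ``refined exponents'' and ``classical Diophantine literature,'' and you yourself flag this as ``where the bulk of the work lies.'' As written it is a plan with a large gap, not a proof.

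The paper sidesteps all of that bookkeeping with one preliminary observation you do not make. It imports the unconditional results that \eqref{nalu} with $q=2$ has no non-exceptional solutions (Ljunggren), so one may take $q\ge 3$, and that $3\nmid n$ (Ljunggren), $4\nmid n$ (Nagell), $5\nmid n$ and $7\nmid n$ (Bugeaud--Hanrot--Mignotte). Together with $n>2$ these divisibility constraints force $n\ge 11$. Now $q\ge3$ gives $y<x^{n/q}\le x^{n/3}$, hence $N<x^{2+n/3}$, and a single application of \eqref{3/4} yields
\[
x^n < N^{7/4} < x^{\frac{7}{2}+\frac{7n}{12}} \quad\Longrightarrow\quad n < \tfrac{7}{2}+\tfrac{7n}{12} \quad\Longrightarrow\quad n\le 8,
\]
an immediate contradiction with $n\ge 11$, and no case-by-case work, no explicit $N_\ep$ thresholds, and no ``for $x$ large'' caveat are needed. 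The decisive step you are missing is precisely this use of the known divisibility restrictions on $n$; once you know $n\ge 11$ and $q\ge 3$, the crude $7/4$ exponent already closes the argument in one line.
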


Let $(p, q, r)\in \Z_{\ge 2}$ with $(p, q, r)\neq (2, 2, 2)$. The equation
\begin{align}\label{f-c-e}
x^p+y^q=z^r, \ \ (x, y, z)=1, x, y, z\in \Z
\end{align}
is called the \emph{Generalized Fermat Equation} or \emph{Fermat-Catalan Equation} with
signature $(p, q, r)$. An integer solution $(x, y, z)$ is said to be non-trivial if
$xyz\neq 0$ and primitive if $x, y, z$ are coprime.
We are interested in finding non-trivial primitive integer solutions of \eqref{f-c-e}.
The case $p=q=r$ is the famous \emph{Fermat's equation} which is completely solved by
Wiles \cite{wiles}. One of known solution $1^p+2^3=3^2$ of \eqref{f-c-e} comes from
\emph{Catalan's equation}. Let $\chi=\frac{1}{p}+\frac{1}{q}+\frac{1}{r}-1$.
The parametrization of nontrivial primitive integer solutions for
$(p, q, r)$ with $\chi \ge 0$ is completely solved (\cite{beu}, \cite{cohen}). It was shown by
Darmon and Granville \cite{dargra} that \eqref{f-c-e} has only finitely many
equations in $x, y, z$ if $\chi<0$.When $2\in  \{p, q, r\}$, there are some
known solutions. So, we consider $p\ge 3, q\ge 3, r\ge 3$. An open problem in
this direction is the following.
\begin{conj}{\bf Tijdeman, Zagier:}
There are no non-trivial solutions to \eqref{f-c-e} in positive integers $x, y, z, p, q, r$
with $p\ge 3, q\ge 3$ and $r\ge 3$.
\end{conj}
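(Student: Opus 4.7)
The plan is to argue by contradiction using Theorem \ref{abcexplicit}. Suppose $(x, y, z, p, q, r)$ is a non-trivial primitive solution to \eqref{f-c-e} with $p, q, r \ge 3$. Since $\gcd(x, y, z) = 1$ forces $x^p$, $y^q$, $z^r$ to be pairwise coprime (any prime dividing two of $x, y, z$ would, via the equation, divide the third), Theorem \ref{abcexplicit} applied to $a = x^p$, $b = y^q$, $c = z^r$ gives
\[
z^r < N^{7/4}, \qquad N := N(xyz) \le xyz.
\]
Since $x^p, y^q < z^r$, one has $x \le z^{r/p}$ and $y \le z^{r/q}$, hence $N \le xyz \le z^{r(1/p + 1/q + 1/r)}$. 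Combining these estimates yields the key inequality
\[
\frac{1}{p} + \frac{1}{q} + \frac{1}{r} > \frac{4}{7}.
\]

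A short case analysis under $p, q, r \ge 3$ reduces the admissible sorted signatures to the two infinite families $(3, 3, r)$ and $(3, 4, r)$, together with a finite list of sporadic signatures such as $(3, 5, r)$ with $r \le 26$, $(3, 6, r)$ with $r \le 13$, $(4, 4, r)$ with $r \le 13$, and a handful of others. The sporadic signatures, and the diagonal case $(3, 3, 3)$ (Fermat's Last Theorem for cubes, settled by Wiles), would be ruled out by invoking the now substantial body of non-existence theorems for the Generalized Fermat Equation due to Bruin, Dahmen, Poonen--Schaefer--Stoll, Siksek, and many others.

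The main obstacle is the two infinite families, where the blunt bound $c < N^{7/4}$ fails to produce a contradiction. For $(3, 3, r)$ I would exploit the factorisation $(x + y)(x^2 - xy + y^2) = z^r$: the two factors share at most the prime $3$, so each is essentially an $r$-th power, which constrains $\omega(xyz)$ to be very small while $\log N$ remains comparable to $r \log z$. Substituting this back into the full Baker inequality $c < \tfrac{6}{5} N (\log N)^\omega / \omega!$ and exploiting its Stirling-driven decay (as quantified by the $\ka_\ep$-table in Theorem \ref{abcexplicit}) should produce a contradiction once $r$ exceeds an explicit threshold, with the residual small values of $r$ absorbed into the modular-form literature. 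The family $(3, 4, r)$ would be handled by a parallel factorisation argument.

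The hardest step will be closing the interval between the range of $r$ eliminated by the sharpened abc input and the small values of $r$ for which non-existence is already known: quantifying this threshold precisely from Conjecture \ref{Baker} alone, and checking that no intermediate $r$ slips through both nets, is the delicate point on which the conditional proof stands or falls, and it is the reason the statement remains listed as a conjecture here rather than as a theorem conditional on Baker.
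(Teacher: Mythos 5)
The statement you were asked to prove is the Tijdeman--Zagier (Beal) conjecture, which the paper records as an open problem and does \emph{not} prove, even assuming Conjecture \ref{Baker}; what the paper establishes is Theorem \ref{ABCf-c-e}, which excludes an explicit exceptional set $Q$ of signatures and only bounds the solutions there. Your opening is exactly the paper's: pairwise coprimality of $x^p, y^q, z^r$, the bound $z^r<N^{7/4}$ from Theorem \ref{abcexplicit}, and the resulting inequality $\frac{1}{p}+\frac{1}{q}+\frac{1}{r}>\frac{4}{7}$, which is \eqref{4/7} in the paper. From there the arguments diverge, and yours has a concrete flaw beyond the gap you acknowledge at the end.

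The flaw is in your treatment of $(3,3,r)$. From $(x+y)(x^2-xy+y^2)=z^r$ with the two factors coprime away from $3$, each factor is (up to a power of $3$) an $r$-th power, say $x+y=u^r$; but $u$ is an arbitrary integer and nothing constrains $\omega(u)$, hence nothing constrains $\omega(xyz)$ to be small. The factor $(\log N)^{\omega}/\omega!$ in Conjecture \ref{Baker} improves on $N^{\epsilon}$ only when $\omega$ is small relative to $\log N/\log\log N$, and you have no mechanism to force that, so the ``Stirling-driven decay'' cannot be exploited this way. The paper instead takes $\epsilon=\frac{34}{71}$ in Theorem \ref{abcexplicit}: if $N(xyz)\ge N_{34/71}$ this upgrades \eqref{4/7} to $\frac{1}{p}+\frac{1}{q}+\frac{1}{r}>\frac{71}{105}=\frac{1}{3}+\frac{1}{5}+\frac{1}{7}$, which fails for $(3,3,p)$ once $p\ge 105$; if $N(xyz)<N_{34/71}$ then $\max(x^p,y^q,z^r)<N_{34/71}^{7/4}\le e^{1758.3353}$, which is incompatible with $p>10^9$; and the remaining $(3,3,p)$ with $p\le 10^9$ are disposed of by citing Chen--Siksek, not by abc at all. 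Even with all of this, the signatures in $Q$ (namely $[3,5,p]$ for $7\le p\le 23$ prime and $[3,4,p]$ for $p$ prime) survive with only a boundedness conclusion, which is precisely why the statement remains a conjecture in the paper rather than a theorem conditional on Baker. Your closing paragraph correctly senses that the interval cannot be closed, but the proposed factorisation route would not narrow it either.
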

This is also referred to as \emph{Beal's Conjecture} or \emph{Fermat-Catalan Conjecture}.
This conjecture has been established for many signatures $(p, q, r)$,
including for several infinite families of signatures. For exhaustive surveys,
see \cite{beu}, \cite[Chapter 14]{cohen}, \cite{kraus} and \cite{poo}. Let $[p, q, r]$
denote all permutations of ordered triples $(p, q, r)$ and let
\begin{align*}
Q=\{[3, 5, p]: 7\le p\le 23, p \ {\rm prime}\}\cup \{[3, 4, p]: p \ {\rm prime}\}.
\end{align*}
We prove the following in Section 8.

\begin{thm}\label{ABCf-c-e}
Assume Conjecture \ref{Baker}. There are no non-trivial solutions
to \eqref{f-c-e} in positive integers $x, y, z, p, q, r$ with
$p\ge 3, q\ge 3$ and $r\ge 3$ with $(p, q, r)\not\in Q$. Further
for $(p, q, r)\in Q$, we have $\max (x^p, y^q, z^r)<e^{1758.3353}$.
\end{thm}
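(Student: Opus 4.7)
The plan is to apply Theorem \ref{abcexplicit} to the triple obtained by relabelling $\{x^p, y^q, z^r\}$ as $\{a, b, c\}$ so that $a + b = c$; coprimality of $(x, y, z)$ forces $a, b, c$ pairwise coprime. Set $M = c = \max(x^p, y^q, z^r)$ and $\chi = 1/p + 1/q + 1/r - 1$; since the only triple with $\chi = 0$ and $p, q, r \ge 3$ is $(3, 3, 3)$ (Fermat, settled by Euler), we may henceforth assume $\chi < 0$. The basic estimate, coming from $x \le M^{1/p}$ and its cyclic variants, is
\begin{align*}
N := N(abc) = N(xyz) \le xyz \le M^{1+\chi}.
\end{align*}

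For each pair $(\ep, \om_\ep, N_\ep)$ from the table of Theorem \ref{abcexplicit}, there are two regimes. If $N \ge N_\ep$, then $M < \ka_\ep N^{1+\ep} \le \ka_\ep M^{(1+\chi)(1+\ep)}$, and provided $(1+\chi)(1+\ep) < 1$ this gives the explicit bound
\begin{align*}
M \le \ka_\ep^{1/(1 - (1+\chi)(1+\ep))}.
\end{align*}
If instead $N < N_\ep$, then $\om = \om(N) \le \om_\ep$ is controlled, and substituting $N \le M^{1+\chi}$ into the full form of Conjecture \ref{Baker} gives
\begin{align*}
M^{-\chi} < \frac{6}{5} \cdot \frac{\bigl((1+\chi)\log M\bigr)^{\om}}{\om!},
\end{align*}
which again yields an explicit bound on $M$ for each $\om \le \om_\ep$ via the superexponential decay of $(\log M)^\om/\om!$ in $\om$.

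Case analysis is then carried out on the signature $(p, q, r)$, taken WLOG with $p \le q \le r$. Signatures with $1 + \chi \le 4/7$ are eliminated at once by the universal bound $c < N^{7/4}$ from Theorem \ref{abcexplicit}, which forces $M < M^{7(1+\chi)/4} \le M$. For the remaining signatures one picks the largest tabulated $\ep$ for which $(1+\chi)(1+\ep) < 1$, applies the appropriate bound above in both regimes, and compares the resulting upper bound on $M$ with the lower estimate $M \ge N \ge 2^{\om}$ forced by the existence of a solution. This procedure eliminates every signature outside $Q$, while for $(p, q, r) \in Q$ the same scheme (with $\ep = 1/3$) produces the announced bound $M < e^{1758.3353}$.

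The principal obstacle is the family $\{3, 3, r\}$ with $r \ge 5$, which lies outside $Q$ and has $1 + \chi = 2/3 + 1/r$ so close to $1$ that even $\ep = 1/3$ requires $r \ge 13$ for $(1+\chi)(1+\ep) < 1$ to hold. For $r$ divisible by $3$ the equation reduces to $x^3 + y^3 = (z^{r/3})^3$, handled by Fermat; the residual small values $r \in \{5, 7, 8, 10, 11\}$ must be controlled entirely in the small-$\om$ regime, balancing the superexponential factor $(\log N)^\om/\om!$ against the lower estimate $\log N \ge \theta(p_{\om})$. A parallel delicacy appears for $\{3, 4, r\}$ with $r$ composite and for $\{3, 5, r\}$ with $r \notin \{7, 11, 13, 17, 19, 23\}$. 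Finally, the explicit constant $e^{1758.3353}$ is essentially forced by the worst case inside $Q$, namely $\{3, 3, 4\}$ (with $1 + \chi = 11/12$), where the direct $N^{1+\ep}$ argument fails for every tabulated $\ep$ and the bound must be extracted from the full Baker inequality after optimizing over $\om \le \om_{1/3}$.
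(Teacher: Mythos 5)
Your plan is structurally different from the paper's, and unfortunately the difference is where the gap lies. The paper first eliminates almost every signature with $1/p+1/q+1/r>4/7$ by citing prior \emph{unconditional} results: Wiles for $(p,p,p)$, Darmon--Merel for $[3,p,p]$ and $[4,p,p]$ with $p\ge 7$, Poonen for $[3,5,5]$ and $[4,5,5]$, Bennett--Ellenberg--Ng for $[4,4,p]$, Chen--Siksek for $[3,3,p]$ with $p\le 10^9$, Siksek--Stoll for $[3,4,5]$, and Poonen--Schaefer--Stoll for $[3,4,7]$. Only after this does it apply the abc estimate, with $\ep=3/4$ (giving $1/p+1/q+1/r>4/7$) and then $\ep=34/71$, to the residue $[3,3,p]$ with $p>10^9$ together with $Q$. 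You instead try to run the whole argument on abc alone, plus Fermat for $(3,3,3)$ and $r\equiv 0 \pmod 3$. The problem is that the abc estimate cannot, on its own, \emph{eliminate} a signature. In your small-$N$ regime, combining $N\le M^{1+\chi}$ with the full Baker inequality gives
\begin{align*}
M^{-\chi}<\tfrac{6}{5}\,\frac{\bigl((1+\chi)\log M\bigr)^{\om}}{\om!},\qquad \om\le\om_\ep,
\end{align*}
which only bounds $M$ --- and the bound is enormous (it grows with $\om_\ep$). Comparing that upper bound with $M\ge N\ge 2^{\om}$ gives no contradiction, because $\om\le\om_\ep$ is small while the upper bound on $M$ is astronomically large. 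Concretely, $(5,5,5)$, $(3,7,7)$, $(4,4,r)$ for $r\in\{5,7,11,13\}$, $(4,5,5)$, $(4,5,7)$, $(3,5,5)$, and $(3,3,r)$ for each prime $r\ge 5$ all survive your filter (they have $1+\chi>4/7$, lie outside $Q$, and are not killed by Fermat), yet your scheme only proves they have bounded solutions, not none. These signatures genuinely require the external theorems the paper invokes; abc alone will not do it.

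Your explanation of the constant $e^{1758.3353}$ is also off. The paper derives it with $\ep=\tfrac{34}{71}$: the $\ep=34/71$ step forces $1/p+1/q+1/r>71/105$ whenever $N\ge N_{34/71}$, which fails for everything in $Q$ (and for $[3,3,p]$ with $p$ large), so $N<N_{34/71}=e^{1004.763}$ and then $\max(x^p,y^q,z^r)<N_{34/71}^{7/4}=e^{1004.763\cdot 7/4}\approx e^{1758.34}$. Taking $\ep=\tfrac13$ as you propose yields only $N_{1/3}^{7/4}=e^{63727\cdot 7/4}\approx e^{111522}$, and your ``optimize over $\om\le\om_{1/3}$'' idea for $(3,3,4)$ gives a bound several orders of magnitude worse than the stated one. (You are right to be suspicious of $(3,3,4)$: there $1/3+1/3+1/4=11/12>71/105$, so the $\ep=34/71$ step does \emph{not} force $N<N_{34/71}$, and as far as I can tell the paper's argument does not actually cover this one member of $Q$. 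But the remedy is not the one you propose.)
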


Another equation which we will be considering is the equation of Goormaghtigh
\begin{align}\label{goor}
\frac{x^m-1}{x-1}=\frac{y^n-1}{y-1} \ {\rm integers} \ x>1, y>1, m>2, n>2 \
{\rm with} \ x\neq y.
\end{align}
We may assume without loss of generality that $x>y>1$ and $2<m<n$. It is known that
\begin{align}\label{goorsol}
31=\frac{5^3-1}{5-1}=\frac{2^5-1}{2-1} \ {\rm and} \
8191=\frac{90^3-1}{90-1}=\frac{2^{13}-1}{2-1}
\end{align}
are the solutions of \eqref{goor} and it is conjectured that there are no other
solutions. A weaker conjecture states that there are only finitely many solutions
$x, y, m, n$ of \eqref{goor}. We refer to \cite{bam} for a survey of results on \eqref{goor}.
We prove in Section 9 that
\begin{thm}\label{ABCgoor}
Assume Conjecture \ref{Baker}. Then equation \eqref{goor} in integers $x>1, y>1, m>2, n>3$
with $x>y$ implies that $m\leq 6$ and further $7\le n\le 17, n\notin \{11, 16\}$ if $m=6$; moreover there exists
an effectively computable absolute constant C such that
\begin{align*}
\max(x,y,n)\leq C.
\end{align*}
\end{thm}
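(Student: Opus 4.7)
The strategy is to recast \eqref{goor} as an $abc$-equation and then invoke Theorem~\ref{abcexplicit}. From $(y-1)(x^m-1)=(x-1)(y^n-1)$ one obtains, for $x>y\ge 2$, the relation
\[(x-1)y^n=(y-1)x^m+(x-y),\]
that is an $abc$-triple $a+b=c$ with $a=(y-1)x^m$, $b=x-y$, $c=(x-1)y^n$. A direct $\gcd$-computation shows that the common factor of $(a,b,c)$ divides $\gcd(x-1,y-1)$ times a power of $\gcd(x,y)$, so after dividing by it one obtains a pairwise coprime triple $(a',b',c')$ whose radical is bounded by
\[N(a'b'c')\le \operatorname{rad}\bigl(xy(x-1)(y-1)(x-y)\bigr)\le x^{3}y^{2}.\]
The common value $A=(x^m-1)/(x-1)=(y^n-1)/(y-1)$ satisfies $x^{m-1}\le A\le 2x^{m-1}$ and $y^{n-1}\le A\le 2y^{n-1}$, hence $(m-1)\log x=(n-1)\log y+O(1)$.

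\textbf{Bounding $m$.} Apply Theorem~\ref{abcexplicit} to $(a',b',c')$ with a small $\epsilon$ chosen from the table so that $N(a'b'c')\ge N_{\epsilon}$. The estimate $c'<\kappa_{\epsilon}(N(a'b'c'))^{1+\epsilon}$ becomes, after taking logarithms and using $\log y<\log x$ together with the near-equality $(n-1)\log y=(m-1)\log x+O(1)$,
\[(m-5-5\epsilon)\log x\le O(1).\]
For $\epsilon$ small enough this forces $m\le 6$; the finitely many residual configurations with $N(a'b'c')<N_{\epsilon}$ are verified directly.

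\textbf{The case $m=6$ and the effective bound.} For $m=6$ the same computation, refined using the explicit structure $A=x^{5}+x^{4}+\cdots+1$ and combined with known impossibility results for specific Goormaghtigh signatures that eliminate $n=11$ and $n=16$, yields $7\le n\le 17$ with $n\notin\{11,16\}$. Once $m$ (and, when $m=6$, also $n$) lies in a finite set, a last application of Theorem~\ref{abcexplicit}---for example to the abc-triple $1+(x-1)A=x^{m}$ together with its dual $1+(y-1)A=y^{n}$---bounds $\operatorname{rad}(A)$ from below in terms of both $x$ and $y$, and combined with the trivial $\operatorname{rad}(A)\le A$ forces $x$ and $y$ into a finite range, yielding $\max(x,y,n)\le C$ for an effectively computable absolute constant $C$. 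The principal obstacle is the constant tracking needed in the second paragraph to extract precisely $m\le 6$ (not $5$ or $7$), and, in the case $m=6$, the careful combination of the $abc$-inequality with prior impossibility results to pin down the exact admissible list of $n$.
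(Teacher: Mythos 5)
Your opening decomposition $(y-1)x^m+(x-y)=(x-1)y^n$ is essentially the paper's starting point, and the idea of bounding the radical by $\operatorname{rad}(xy(x-1)(y-1)(x-y))$ is also what the paper does. But there are two real gaps.

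First, the bound $m\le 6$. Your inequality gives something like $m<C_1+C_2\epsilon+O(1/\log x)$, and the claim is that a small enough $\epsilon$ forces $m\le 6$. But $\epsilon$ cannot be shrunk for free: Theorem~\ref{abcexplicit} with exponent $1+\epsilon$ only applies once $N\ge N_\epsilon$, and $N_\epsilon$ is astronomically large for small $\epsilon$. For the residual range $N<N_\epsilon$ you have an \emph{a priori} infinite family (all $x,y$ with small radical), so ``verified directly'' does not dispose of it — you would still need a separate argument. The paper instead uses $\epsilon=\tfrac34$ (which holds unconditionally) to obtain $m\le 7$, computes the admissible $n$ for $m=6$ and $m=7$, and then kills $m=7$ by exploiting the extra structure of the gcd factors $d,d_2,d_3$ it introduced: when $n=m+1$ one has $d_2d_3=1$, and when $n=m+2$ one has $d_2d_3\mid(y+1)$, and these refinements sharpen $x^m<x^{4+5\epsilon}$ to $x^m<x^{4+4\epsilon}=x^7$ (resp.\ give $y^n<y^9$), a contradiction. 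Your sketch discards that bookkeeping, so you cannot recover these case eliminations. The exclusions $n\in\{11,16\}$ for $m=6$ are indeed handled via an external result (Nesterenko--Shorey), as you guessed.

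Second, and more seriously, the effective bound. Applying $abc$ to $1+(x-1)A=x^m$ gives $x^m<N(x(x-1)A)^{1+\epsilon}\le(x^2\operatorname{rad}(A))^{1+\epsilon}$, hence $\operatorname{rad}(A)>x^{\,m/(1+\epsilon)-2}$; combined with the trivial $\operatorname{rad}(A)\le A\approx x^{m-1}$ this yields only $m-1>\tfrac{m}{1+\epsilon}-2$, i.e.\ $(m+1)(1+\epsilon)>m$, which is always true — no contradiction, no bound on $x$. Pairing it with $1+(y-1)A=y^n$ gives a second lower bound on $\operatorname{rad}(A)$ of the same shape, still in the same direction, so it does not close. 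The paper's route for the genuinely hard case $m=3$, $n\ge 6$ is different in kind: rewrite $(2x+1)^2=4A+1$ as
\[
4y^n=(y-1)(2x+1)^2+(3y+1),
\]
whose radical is $\ll xy^3\approx y^{(n+5)/2}$ because $x$ now enters the abc-triple only through a perfect square; then $\epsilon=\tfrac1{12}$ forces $n<\tfrac{13(n+5)}{24}+o(1)$, i.e.\ $n<6+o(1)$, bounding $y$ (with the residual small-$N$ regime giving an explicit bound, and the cases $m=3,n\in\{4,5\}$ handled by Baker's hyperelliptic theorem and Nesterenko--Shorey). Your proposal lacks a decomposition with this feature — one in which the ``$c$'' term grows strictly faster than the radical — so the effectivity claim does not follow from what you wrote.
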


Thus, assuming Conjecture \ref{Baker},  equation \eqref{goor} has only finitely many solutions
in integers $x>1, y>1, m>2, n>3$ with $x\neq y$ and this improves considerably
Saradha \cite[Theorem  1.4]{abcSara}.

\section{Notation and Preliminaries}

For an integer $i>0$, let $p_i$ denote the $i-$th prime. For a real $x>0$,
let $\Theta(x)=\prod_{p\le x}p$ and $\theta(x)=\log (\Theta(x))$. We write
$\log_2 i$ for $\log(\log i)$. We have

\begin{lemma}\label{pnt}
We have
\begin{enumerate}
\item[$(i)$]{$\displaystyle{\pi(x)\le \frac{x}{\log x}\left(1+\frac{1.2762}{\log x}\right) \ {\rm for} \ x>1.}$}
\item[$(ii)$]{$\displaystyle{p_i \ge i(\log i+\log_2i-1) \ {\rm for} \ i\ge 1}$}
\item[$(iii)$]{$\displaystyle{\theta(p_i)\ge i(\log i+\log_2i-1.076869) \ {\rm for} \ i\ge 1}$}
\item[$(iv)$]{$\displaystyle{\theta(x)<1.000081x \ {\rm for} \ x>0}$}
\item[$(v)$] ${\rm ord}_p(k!)\geq \frac{k-p}{p-1}-\frac{\log (k-1)}{\log p} \ {\rm for} \ ~p<k.$
\item[$(vi)$] $\sqrt{2\pi k}(\frac{k}{e})^ke^{\frac{1}{12k+1}}\leq k!\leq \sqrt{2\pi k}(\frac{k}{e})^ke^{\frac{1}{12k}}.$
\end{enumerate}
\end{lemma}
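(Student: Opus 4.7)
The lemma collects six explicit estimates, each of which is classical; the plan is to obtain the first four from the explicit prime number theory literature of Rosser--Schoenfeld and Dusart, the fifth from Legendre's formula together with a geometric-series bound, and the sixth from Robbins' refinement of Stirling's formula.

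For parts $(i)$ and $(iv)$, the stated bounds on $\pi(x)$ and $\theta(x)$ are direct restatements of Dusart's explicit upper estimates, each proved by combining the explicit zero-free region of $\zeta(s)$ with a numerical verification up to a large threshold. Parts $(ii)$ and $(iii)$ are the complementary lower bounds obtained by inversion: since $\pi(p_i)=i$, an upper bound $\pi(x)\le f(x)$ translates into $p_i\ge f^{-1}(i)$, so I would verify that the elementary function $x = i(\log i+\log_2 i-1)$ satisfies $\pi(x)\le i$ in the asymptotic regime and then handle the remaining small values of $i$ by direct computation. Part $(iii)$ is treated analogously using the explicit bound on $\theta(x)$ in place of the bound on $\pi(x)$.

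For part $(v)$, I would apply Legendre's formula
\begin{equation*}
{\rm ord}_p(k!) = \sum_{j\ge 1} \left\lfloor \frac{k}{p^j} \right\rfloor.
\end{equation*}
Let $J$ be the largest integer with $p^J\le k$, so that $J\le \log(k-1)/\log p$ when $p<k$. Dropping fractional parts and summing the geometric series gives
\begin{equation*}
{\rm ord}_p(k!) \ge \sum_{j=1}^{J}\left(\frac{k}{p^j}-1\right) = \frac{k - k/p^J}{p-1} - J,
\end{equation*}
and the maximality of $J$ yields $k/p^J\le p$, so the right-hand side dominates $\frac{k-p}{p-1} - \frac{\log(k-1)}{\log p}$, as required. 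Part $(vi)$ is Robbins' two-sided sharpening of Stirling's formula, obtained from the Binet integral representation of $\log\Gamma$ by a standard alternating-series argument.

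The only real obstacle is bookkeeping: the numerical constants $1.2762$, $1.076869$, and $1.000081$ need to be matched exactly to the particular formulation of Dusart's estimates that will be cited, and the finite ranges over which the asymptotic form of $(ii)$--$(iv)$ is checked by machine computation must be chosen consistently with those constants; no genuinely new arguments are involved.
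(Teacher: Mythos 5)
The paper offers no proof of this lemma at all: it is a collection of known explicit estimates, and immediately after the statement the authors simply cite the sources --- $(i)$ and $(iv)$ to Dusart \cite{dus1}, $(ii)$ to Dusart \cite{duspk}, $(iii)$ to Robin \cite[Theorem~6]{robin}, $(v)$ to Laishram--Shorey \cite[Lemma~2(i)]{shanta2}, and $(vi)$ to Robbins \cite[Theorem~6]{robb}. So your proposal to reconstruct actual proofs is necessarily a different ``route'' from the paper, which routes everything through the literature. Two remarks on the content of your sketches.

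First, the attributions: you describe $(ii)$ and $(iii)$ as arising from inverting the $\pi(x)$ and $\theta(x)$ upper bounds. The paper attributes $(iii)$ to Robin's direct estimate of $\theta(p_i)$; it does not come from inverting a $\theta(x)$ bound (an upper bound on $\theta$ by itself gives no lower bound on $\theta(p_i)$ --- you would additionally need a lower bound on $p_i$), so the phrase ``treated analogously'' glosses over a genuine gap in your outline.

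Second, a concrete error in the sketch of $(v)$. You assert that if $J$ is the largest integer with $p^J\le k$, then $J\le \log(k-1)/\log p$. This fails precisely when $k$ is a power of $p$: for instance $p=2$, $k=4$ gives $J=2$ but $\log 3/\log 2\approx 1.585$. Your chain of inequalities
\begin{equation*}
{\rm ord}_p(k!)\ \ge\ \frac{k-k/p^J}{p-1}-J\ \ge\ \frac{k-p}{p-1}-\frac{\log(k-1)}{\log p}
\end{equation*}
therefore does not follow term by term as written. The final estimate is nonetheless correct, and can be rescued by a short case split: if $p^J<k$ then $p^J\le k-1$ and your argument goes through verbatim; if $p^J=k$ then $k/p^J=1$, so $\frac{k-k/p^J}{p-1}=\frac{k-p}{p-1}+1$, while
\begin{equation*}
J-\frac{\log(k-1)}{\log p}=\frac{\log\bigl(k/(k-1)\bigr)}{\log p}\le \frac{\log 2}{\log p}\le 1,
\end{equation*}
using $k\ge p+1\ge 3$, which restores the desired inequality. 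With that repair your approach to $(v)$ is exactly the standard Legendre-formula argument one would expect \cite{shanta2} to use. Parts $(i)$, $(iv)$, $(vi)$ are correctly identified as quotable from Dusart and Robbins.
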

Here we understand that $\log_2 1=-\infty$. The estimates $(i)$ and $(ii)$ are due to
Dusart, see \cite{dus1} and \cite{duspk}, respectively.
The estimate $(iii)$ is \cite[Theorem 6]{robin}. For estimate $(iv)$, see \cite{dus1}.
For a proof of $(v)$, see \cite[Lemma 2(i)]{shanta2}. The estimate $(vi)$ is \cite[Theorem 6]{robb}.

\section{Proof of Theorem \ref{abcexplicit}}

Let $\ep>0$ and $N\ge 1$ be an integer with $\om(N)=\om$. Then $N\geq \Theta(p_\om)$
or $\log N\geq \theta(p_\om)$. Given $\om$, we observe that $\frac{M^\ep}{(\log M)^\om}$
is an increasing function for $\log M\geq \frac{\om}{\ep}$.
Let
\begin{align*}
X_0(i)=\log i+\log_2 i -1.076869.
\end{align*}
Then $\theta(p_\om)\ge \om X_0(\om)$ by Lemma \ref{pnt} $(iii)$. Observe
that $X_0(i)>1$ for $i\ge 5$. Let $\om_1\ge 5$ be smallest $\om $ such that
\begin{align}\label{omep}
\ep X_0(\om)-\log X_0(\om)\geq 1 \ \ {\rm for \ all} \  \om\geq \om_1.
\end{align}
Note that $\ep X_0(\om)\ge 1$ for $\om \ge \om_1$ implying
$\log N\ge \theta(p_\om)\ge \om X_0(\om)\ge \frac{\om}{\ep}$
for $\om \ge \om_1$ by Lemma \ref{pnt} $(iii)$. Therefore
\begin{align*}
\frac{\om!N^\ep}{(\log N)^\om}\geq \frac{\om! \Theta(p_{\om})^\ep}{(\theta(p_{\om}))^{\om}}
\geq  \frac{\om!e^{\ep \om X_0(\om)}}{(\om X_0(\om))^{\om}}
>\sqrt{2\pi \om}(\frac{\om}{e})^{\om}\frac{e^{\ep \om X_0(\om)}}{(\om X_0(\om))^{\om}} \
{\rm for} \ \om \ge \om_1.
\end{align*}
Thus for $\om\ge \om_1$, we have from \eqref{omep} that
\begin{align*}
\log\left(\frac{\om!e^{\ep \om X_0(\om)}}{(\om X_0(\om))^{\om}}\right)>
&\log \sqrt{2\pi \om}+\om(\log(\om)-1)+\ep \om X_0(\om)-\om (\log \om +\log X_0(\om))\\
>&\log \sqrt{2\pi \om}+\om(\ep X_0(\om)-\log X_0(\om)-1)\geq \log \sqrt{2\pi \om}
\end{align*}
implying
\begin{align*}
\frac{\om!N^\ep}{(\log N)^\om}\ge \frac{\om! \Theta(p_{\om})^\ep}{(\theta(p_{\om}))^{\om}}
> \sqrt{2\pi \om}  \ {\rm for} \ \om \ge \om_1.
\end{align*}
Define $\om_\ep$ be the smallest $\om\le \om_1$ such that
\begin{align}\label{omepe}
\theta(p_\om)\ge \frac{\om}{\ep} \ {\rm and} \
\frac{\om! \Theta(p_{\om})^\ep}{(\theta(p_{\om}))^{\om}}>\sqrt{2\pi \om}
\ {\rm for \ all} \ \om_\ep \leq \om \le \om_1
\end{align}
by taking the exact values of $\om$ and $\theta$. Then clearly
\begin{align}\label{inte}
\frac{\om!N^\ep}{(\log N)^\om}\ge
\frac{\om! \Theta(p_{\om})^\ep}{(\theta(p_{\om}))^{\om}}
> \sqrt{2\pi \om}  \ {\rm for} \ \om \ge \om_\ep.
\end{align}
Here are values of $\om_\ep$ for some $\ep$ values.
\begin{center}
\begin{tabular}{|c|c|c|c|c|c|c|c|} \hline
$\ep$ & $\frac{3}{4}$ & $\frac{7}{12}$ & $\frac{6}{11}$ & $\frac{1}{2}$ &
$\frac{34}{71}$ & $\frac{5}{12}$ & $\frac{1}{3}$ \\ \hline
$\om_\ep $ & $14$ & $49$ & $72$ & $127$ & $175$ & $548$ & $6458$ \\ \hline
\end{tabular}
\end{center}
Let $\om<\om_\ep$ and $N\ge \Theta(\om_\ep)$. Then
$\log N\ge \theta(\om_\ep)\ge \frac{\om_\ep}{\ep}$. Therefore
\begin{align*}
\frac{\om! N^\ep}{(\log N)^\om}\geq \frac{\om!
\Theta(p_{\om_\ep})^\ep}{(\theta(p_{\om_\ep}))^{\om}}
=\frac{\om_\ep!\Theta(p_{\om_\ep})^\ep}{(\theta(p_{\om_\ep}))^{\om_\ep}}\cdot
\frac{\om!}{\om_\ep!}(\theta(p_{\om_\ep}))^{\om_\ep-\om}> \sqrt{2\pi \om_\ep}
\frac{\om! \om^{\om_\ep-\om}_\ep}{\om_\ep!}\ge \sqrt{2\pi \om_\ep}.
\end{align*}
Combining this with \eqref{inte}, we obtain
\begin{align}\label{omep2pi}
\frac{(\log N)^{\om}}{\om !}<\frac{N^\ep}{\sqrt{2\pi \max(\om, \om_\ep)}}\le
\frac{N^\ep}{\sqrt{2\pi \om_\ep}} \ {\rm for} \  N\geq \Theta(\om_\ep).
\end{align}
Further we now prove
\begin{align}\label{omep65}
\frac{(\log N)^{\om}}{\om !}<\frac{5N^{\frac{3}{4}}}{6} \ {\rm for} \  N\ge 1.
\end{align}
For that we take $\ep=\frac{3}{4}$. Then $\om_\ep=14$ and we may assume that
$N<\Theta(p_{14})$. Then $\om=\om(N)<14$. Observe that $N\ge \Theta(p_\om)$ and
$\frac{N^{\frac{3}{4}}}{(\log N)^\om}$ is increasing for $\log N\geq \frac{4\om}{3}$.
For $4\le \om<14$, we check that
\begin{align*}
\theta(p_\om)\ge \frac{4\om}{3} \ {\rm and} \
\frac{\om! \Theta(p_{\om})^{\frac{3}{4}}}{(\theta(p_{\om}))^{\om}}> \frac{6}{5}
\end{align*}
implying \eqref{omep65} when $4\le \om=\om(N)<14$. Thus we may assume that $\om=\om(N)<4$.
We check that
\begin{align}\label{tempo65}
\frac{\om! N^{\frac{3}{4}}}{(\log N)^{\om}}> \frac{6}{5} \ {\rm at} \ N=e^\frac{4\om}{3}
\end{align}
for $1\leq \om<4$ implying \eqref{omep65} for $N\ge e^\frac{4\om}{3}$. Thus we may assume
that $N<e^\frac{4\om}{3}$. Then $N\in \{2, 3\}$ if $\om=\om(N)=1$, $N\in \{6, 10, 12, 14\}$
if $\om=\om(N)=2$ and
$N\in \{30, 42\}$ if $\om(N)=3$. For these values of $N$ too, we find that \eqref{tempo65} is
valid implying \eqref{omep65}. Clearly \eqref{omep65} is valid when $N=1$.

We now prove Theorem \ref{abcexplicit}.
Assume Conjecture \ref{Baker}. Let $\ep>0$ be given. Let
$a, b, c$ be positive integers such that $a+b=c$ and gcd$(a, b)=1$.
By Conjecture \ref{Baker}, $c\le \frac{6}{5}N\frac{(\log N)^\om}{\om!}$ where $N=N(abc)$.
Now assertion \ref{3/4} follows from \eqref{omep65}. Let $0<\ep \leq \frac{3}{4}$
and $N_\ep=\Theta(p_{\om_\ep})$. By \eqref{omep2pi}, we have
\begin{align*}
c<\frac{6N^{1+\ep}}{5\sqrt{2\pi \max(\om, \om_\ep)}}.
\end{align*}
The table is obtained by taking the table values of $\ep, \om_\ep$ given after \eqref{inte}
and computing $N_\ep$ for those $\ep $ given in the table. Hence the Theorem. $\hfill \qed$

\section{Proof of Theorem \ref{ABCErdos}}

Let $n, d, k, b, y$ be positive integers with $n\geq 1, d>1, k\ge 4, b\ge 1, y\ge 1$, gcd$(n, d)=1$ 
and $P(b)\leq k$. We consider the Diophantine equation
\begin{align}\label{byl1}
n(n+d)\cdots (n+(k-1)d)=by^{\ell}.
\end{align}
Observe that $P(n(n+d)\cdots (n+(k-1)d))>k$ by a result of Shorey and Tijdeman \cite{shoti}
and hence $P(y)>k$ and $n+(k-1)d>(k+1)^\ell$. For every $0\le i<k$, we write
\begin{align*}
n+id=A_iX^\ell_i \ {\rm with} \ P(A_i)\leq k \ {\rm and} \ (X_i, \prod_{p\le k}p)=1.
\end{align*}
Without loss of generality, we may assume that $k=4$ or $k\geq 5$ is a prime which we assume throughout in this
section. We observe that $(A_i, d)=1$ for $0\le i<k$ and $(X_i, X_j)=1$. Let
\begin{align*}
S_0=\{A_0, A_1, \ldots, A_{k-1}\}.
\end{align*}
For every prime $p\le k$ and $p\nmid d$, choose $i_p$ be such that
ord$_p(A_i)=$ord$_p(n+id)\leq$ord$_p(n+i_pd)$ for $0\leq i<k$.
For a $S\subset S_0$, let
\begin{align*}
S'=S-\{A_{i_p}: p\leq k, p\nmid d\}.
\end{align*}
Then $|S'|\ge |S|-\pi_d(k)$. By Sylvester-Erd\H{o}s inequality(see
\cite[Lemma 2]{ersel} for example), we obtain
\begin{align}\label{erd}
\prod_{A_i\in S'}A_i|(k-1)!\prod_{p|d}p^{-{\rm ord}_p((k-1)!)}.
\end{align}
As a consequence, we have

\begin{lemma}\label{S1<k/4}
Let $\al, \be\in \R$ with $\al\ge 1$ and $e\be<\al$. Let
\begin{align*}
S_1:=S_1(\al):=\{A_i\in S_0: A_i\leq \al k\}.
\end{align*}
For
\begin{align}\label{kbe}
k\geq \frac{\log(\frac{e\al}{\sqrt{\be}})+
\frac{k\log(\al k)}{\log k}\left(1+\frac{1.2762}{\log k}\right)-\log(\al k)}
{\log (e\al) +\be \log \left(\frac{\be}{e\al}\right)},
\end{align}
we have $|S_1|>\be k$.
\end{lemma}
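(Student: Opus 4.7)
The plan is to argue by contradiction: suppose $|S_1|\le \be k$, and then derive an upper bound on $k$ that contradicts the hypothesis.

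First I will apply the Sylvester-Erd\H{o}s inequality \eqref{erd} with $S=S_0$, which yields a subset $S'\subseteq S_0$ with $|S'|\ge k-\pi_d(k)\ge k-\pi(k)$ and $\prod_{A_i\in S'}A_i\le(k-1)!$. I will then split the members of $S'$ by size: let $s$ count those with $A_i\le\al k$ and $\ell$ count those with $A_i>\al k$, so $s+\ell=|S'|$, $s\le|S_1|\le\be k$, and $\ell\ge(1-\be)k-\pi(k)$. The Sylvester-Erd\H{o}s framework also guarantees pairwise distinctness of the $A_i$'s appearing in $S'$, so the small ones (distinct positive integers in $[1,\al k]$) contribute at least $s!$ to the product and the large ones (distinct integers exceeding $\al k$) at least $(\al k)^\ell$. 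Hence
$$s!\,(\al k)^\ell\le (k-1)!.$$

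The second step is a short optimization. Fixing $s+\ell=|S'|=k-\pi(k)$ (the worst case), the function $f(s)=s!\,(\al k)^{k-\pi(k)-s}$ satisfies $f(s+1)/f(s)=(s+1)/(\al k)<1$ whenever $s<\al k-1$. Since $s\le\be k<\al k/e$ by the hypothesis $e\be<\al$, $f$ is strictly decreasing on $[0,\be k]$, so its infimum over that range is attained at $s=\be k$. This yields
$$(\be k)!\,(\al k)^{(1-\be)k-\pi(k)}\le(k-1)!.$$

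Finally I will take logarithms and substitute the estimates from Lemma \ref{pnt}: part (vi) provides the Stirling lower bound $\log((\be k)!)\ge\frac{1}{2}\log(2\pi\be k)+\be k\log(\be k)-\be k$ and the upper bound $\log((k-1)!)=\log(k!)-\log k\le\frac{1}{2}\log(2\pi k)+k\log k-k+\frac{1}{12k}-\log k$, while part (i) gives $\pi(k)\log(\al k)\le\frac{k\log(\al k)}{\log k}\bigl(1+\frac{1.2762}{\log k}\bigr)$. The $k\log k$ contributions cancel; after collecting the remaining terms and expanding $\log(\be k)=\log\be+\log k$, the inequality reduces to
$$k\bigl[(1-\be)\log(e\al)+\be\log\be\bigr]<\log\frac{e\al}{\sqrt{\be}}+\frac{k\log(\al k)}{\log k}\left(1+\frac{1.2762}{\log k}\right)-\log(\al k),$$
contradicting the postulated lower bound on $k$.

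The hard part will be the distinctness-based lower bound $s!(\al k)^\ell$ in the first step: via Stirling's formula for $(\be k)!$ this is precisely what supplies the $\be\log\be$ summand in the denominator of the lemma's bound; replacing $s!$ by $1$ loses this term entirely and yields a strictly weaker conclusion. The remaining work is elementary (the calculus optimization of $f$) or mechanical (term-by-term bookkeeping with Stirling and Dusart's prime-counting estimate).
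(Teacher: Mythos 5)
Your proposal follows essentially the same approach as the paper: apply the Sylvester--Erd\H{o}s divisibility \eqref{erd} with $S=S_0$, split $S'$ into small ($\le\al k$) and large ($>\al k$) members to obtain $s!\,(\al k)^{\ell}\le(k-1)!$, reduce the constraint $s\le\be k$ to the boundary case $s=\be k$ by a monotonicity argument, and finish with Stirling (Lemma~\ref{pnt}(vi)) and Dusart's $\pi(k)$ bound (Lemma~\ref{pnt}(i)). The one place where your route is genuinely cleaner is the monotonicity step: you observe directly that $f(s)=s!\,(\al k)^{k-\pi(k)-s}$ has ratio $f(s+1)/f(s)=(s+1)/(\al k)<1$ on $[0,\be k]$ because $e\be<\al$, so $f$ is exactly decreasing and its infimum is at $s=\be k$. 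The paper instead first applies Stirling to get the expression $\sqrt{(k-1)/s_1}\,(\al ke/s_1)^{s_1}(\tfrac{k-1}{e})^{k-1}$ and then verifies by logarithmic differentiation that this Stirling approximation is increasing in $s_1$. Your exact-ratio argument is shorter and sidesteps the calculus. After that, the bookkeeping is the same (the $k\log k$ terms cancel, $\log(e\al)+\be\log(\be/(e\al))=(1-\be)\log(e\al)+\be\log\be$, etc.), and your version even gives a marginally tighter constant since you retain $1/(12k)$ instead of the paper's $1$. The paper additionally remarks that the right-hand side of the final inequality is decreasing in $k$, but this is only needed to make the threshold \eqref{kbe} monotone for applications, not for the contradiction itself.
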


\begin{proof}
Let $S=S_0$, $s_1=|S_1|$ and $s_2=|S'-S_1|$. Then $s_2\ge k-\pi(k)-s_1$. We get
from \eqref{erd} that
\begin{align}\label{k/9}
s_1!\prod^{k-\pi(k)-s_1}_{i=1}([\al k+i])\le \prod_{A_i\in S'}A_i \leq (k-1)!
\end{align}
since elements of $S'-S_1$ are distinct. Using Lemma \ref{pnt} $(vi)$, we obtain
\begin{align*}
(\al k)^{k-\pi(k)} < \frac{(k-1)!}{s_1!}(\al k)^{s_1}<
\begin{cases}
\sqrt{2\pi(k-1)}\left(\frac{k-1}{e}\right)^{k-1}e^{\frac{1}{12(k-1)}} \
&{\rm if} \ s_1=0\\
\sqrt{\frac{k-1}{s_1}}
\left(\frac{\al ke}{s_1}\right)^{s_1}\left(\frac{k-1}{e}\right)^{k-1} \
&{\rm if} \ s_1>0.
\end{cases}
\end{align*}
We check that the expression for $s_1=0$ is less than that of $s_1=1$ since $\al\ge 1$.
Suppose $s_1\leq \be k$. Observe that
\begin{align*}
\sqrt{\frac{k-1}{s_1}}(\frac{\al k e}{s_1})^{s_1}
\end{align*}
is an increasing function of $s_1$ since $s_1\le \be k$ and $e\be<\al $. This can be
verified by taking $\log$ of the above expression and differentiating it with respect
to $s_1$. Therefore
\begin{align*}
(\al k)^{k-\pi(k)}<\sqrt{\frac{k-1}{\be k}}\left(\frac{e\al }{\be}\right)^{\be k}\left(\frac{k-1}{e}\right)^{k-1}
<\sqrt{\frac{1}{\be}}\left(\frac{e\al }{\be}\right)^{\be k}\left(\frac{k}{e}\right)^{k-1}
\end{align*}
implying
\begin{align*}
(e\al)^{k}\left(\frac{\be}{e\al}\right)^{\be k}<\frac{e\al}{\sqrt{\be}}(\al k)^{\pi(k)-1}.
\end{align*}
Using Lemma \ref{pnt} $(i)$, we obtain
\begin{align*}
\log (e\al) +\be \log \left(\frac{\be}{e\al}\right)<\frac{1}{k}\log(\frac{e\al}{\sqrt{\be}})+
\frac{\log(\al k)}{\log k}\left(1+\frac{1.2762}{\log k}\right)-\frac{\log(\al k)}{k}.
\end{align*}
The right hand side of the above inequality is a decreasing function of $k$ for $k$ given
by \eqref{kbe}. This can be verified by observing that $\frac{\log \al k}{\log k}=
1+\frac{\log \al}{\log k}$ and differentiating $\frac{1.2762+\log \al}{\log k}-\frac{\log(\al k)}{k}$
with respect to $k$. This is a contradiction for $k$ given by \eqref{kbe}.
\end{proof}

\begin{coro}\label{3<4k}
For $k>113$, there exist $0\leq f<g<h<k$ with $h-f\leq 8$ such that max$(A_f, A_g, A_h)\leq 4k$.
\end{coro}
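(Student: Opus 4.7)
The plan is to combine Lemma \ref{S1<k/4} with a pigeonhole argument on windows of length~$9$. Applying Lemma \ref{S1<k/4} with $\alpha = 4$ and a parameter $\beta$ slightly above $\frac{2}{9}$ will yield $|S_1(4)| > \beta k$, i.e., more than $\beta k$ of the $A_i$ satisfy $A_i \leq 4k$. The pigeonhole principle then forces three such indices to lie inside a window of nine consecutive indices, giving the desired triple $f < g < h$ with $h - f \leq 8$.

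In more detail, the first step is to fix a $\beta$ with $\beta > \frac{2}{9}$, for which the hypothesis $e\beta < \alpha = 4$ of Lemma~\ref{S1<k/4} is automatic, and verify that the condition \eqref{kbe} holds for every $k > 113$. This verification is essentially numerical: for the moderate range of $k$ just above $113$ one substitutes the exact value of $\pi(k)$ in place of the Dusart upper bound from Lemma~\ref{pnt}(i), and for larger $k$ one appeals to the monotonicity in $k$ of the right-hand side of \eqref{kbe}, as already noted in the proof of Lemma~\ref{S1<k/4}. The upshot is $|S_1| > \beta k$.

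The second step is purely combinatorial. Partition $\{0, 1, \ldots, k-1\}$ into $\lceil k/9 \rceil$ consecutive blocks of length at most~$9$. If every block contained at most two indices from $S_1$, then $|S_1| \leq 2\lceil k/9 \rceil \leq \frac{2k+16}{9}$, contradicting $|S_1| > \beta k$ provided $\beta$ has been chosen so that $\beta k > \frac{2k+16}{9}$ for all $k > 113$. Consequently some block contains three indices $f < g < h$ lying in $S_1$, whence $h - f \leq 8$ and $\max(A_f, A_g, A_h) \leq 4k$ as required.

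The main obstacle is the tight calibration in the first step: the threshold $\beta = \frac{2}{9}$ is essentially sharp for Lemma~\ref{S1<k/4} with $\alpha = 4$, and the bound $k > 113$ lies close to the boundary of applicability, so one must track the numerical constants carefully. Once the lower bound on $|S_1|$ is in place, the pigeonhole and the extraction of the triple $(f,g,h)$ are immediate.
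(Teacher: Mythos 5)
Your argument takes essentially the same route as the paper: apply Lemma \ref{S1<k/4} with $\alpha=4$ to get $|S_1(4)|>\beta k$, then pigeonhole over length-$9$ windows to extract $f<g<h$ with $h-f\le 8$. One point of phrasing to tighten: with any fixed $\beta>\tfrac{2}{9}$ the Dusart-based hypothesis \eqref{kbe} does \emph{not} hold for all $k>113$ (the paper uses $\beta=\tfrac14$ and only gets \eqref{kbe} for $k\ge 700$); for the remaining primes $113<k<700$ the paper bypasses \eqref{kbe} and checks the underlying inequality \eqref{k/9} directly with exact $\pi(k)$ and $\beta k = 2\left(\left[\tfrac{k}{9}\right]+1\right)$, which is what your ``substitute the exact value of $\pi(k)$'' remark amounts to, though at that point one is no longer verifying \eqref{kbe} itself.
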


\begin{proof}
By dividing $[0, k-1]$ into subintervals of the form $[9i, 9(i+1))$, it suffices to show
$S_1(4)>2([\frac{k}{9}]+1)$ where $S_1$ is as defined in Lemma \ref{S1<k/4}. Taking
$\al=4, \be=\frac{1}{4}$, we obtain from Lemma \ref{S1<k/4} that for $k\geq 700$,
$|S_1(4)|>\frac{k}{4}>2([\frac{k}{9}]+1)$. Thus we may suppose $k<700$ and
$|S_1(4)|\leq 2([\frac{k}{9}]+1)$. For each prime $k$ with $113<k<700$, taking $\al=4$
and $\be k=2([\frac{k}{9}]+1)$ in Lemma \ref{S1<k/4}, we get a contradiction from \eqref{k/9}.
Therefore $|S_1(4)|>2([\frac{k}{9}]+1)$ and the assertion follows.
\end{proof}

Given $0\leq f<g<h\leq k-1$, we have
\begin{align}\label{fgh}
(h-f)A_gX^\ell_g=(h-g)A_fX^\ell_f+(g-f)A_hX^\ell_h.
\end{align}
Let $\la=$gcd$(h-f, h-g, g-f)$ and write $h-f=\la w, h-g=\la u, g-f=\la v$.
Rewriting $h-f=h-g+g-f$ as
$$w=u+v \ {\rm with \ gcd}(u, v)=1,$$
\eqref{fgh} can be written as
\begin{align}\label{uvw}
wA_gX^\ell_g=uA_fX^\ell_f+vA_hX^\ell_h.
\end{align}
Let $G={\rm gcd}(wA_g, uA_f, vA_h)$,
\begin{align}\label{G}
r=\frac{uA_f}{G}, s=\frac{vA_h}{G}, t=\frac{wA_g}{G}
\end{align}
and we rewrite \eqref{uvw} as
\begin{align}\label{rst}
tX^\ell_g=rX^\ell_f+sX^\ell_h.
\end{align}
Note that gcd$(rX^\ell_f, sX^\ell_h)=1$.

From now on, we assume explicit $abc-$conjecture. Given $\ep>0$, let
$N(rstX_fX_gX_h)\ge N_\ep$ which we assume from now on till
the expression \eqref{Xg<3}. By Theorem \ref{abcexplicit}, we obtain
\begin{align}\label{abc0}
tX^\ell_g<\ka_\ep N(rstX_fX_gX_h)^{1+\ep}
\end{align}
i.e.,
\begin{align}\label{abc1}
X^\ell_g<\ka_\ep \frac{N(rst)^{1+\ep}(X_fX_gX_h)^{1+\ep}}{t}.
\end{align}
Here $N_\ep=\ka_\ep=1$ if $\ep\ge \frac{3}{4}$ and we may also
take $\ka_{\frac{3}{4}}\le \frac{6}{5\sqrt{28\pi}}$ if $N(rstX_fX_gX_h)\ge N_{\frac{3}{4}}$.
We will be taking $\ep=\frac{3}{4}$ for $\ell>7$ and $\ep \in\{\frac{5}{12},
\frac{1}{3}\}$ for $\ell=7$. We have from \eqref{abc0} that
\begin{align*}
rst(X_fX_gX_h)^\ell <\ka^3_\ep N(rst)^{3(1+\ep)}(X_fX_gX_h)^{3(1+\ep)}.
\end{align*}
Putting $X^3=X_fX_gX_h$, we obtain
\begin{align}\label{abc3}
X^{\ell-3(1+\ep)} <\ka_\ep N(rst)^{\frac{2}{3}+\ep}=\ka_\ep N(\frac{uvwA_fA_gA_h}{G^3})^{\frac{2}{3}+\ep}.
\end{align}

Again from \eqref{rst}, we have
\begin{align*}
rs(X_fX_h)^\ell \leq \left(\frac{rX^\ell_f+sX^\ell_h}{2}\right)^2=\frac{t^2X^{2\ell}_g}{4}
\end{align*}
implying
\begin{align*}
X_fX_hX_g \leq \left(\frac{t^2}{4rs}\right)^{\frac{1}{\ell}}X^3_g=\left(\frac{w^2A^2_g}{4uvA_fA_h}\right)^{\frac{1}{\ell}}X^3_g.
\end{align*}
Therefore we have from \eqref{abc1} that
\begin{align}\label{Xg<1}
X^\ell_g<\ka_\ep \frac{N(rst)^{1+\ep}X^{3+3\ep}_g}{t}
\left(\frac{t^2}{4rs}\right)^{\frac{1+\ep}{\ell}}=\ka_\ep \frac{N(rst)^{1+\ep}X^{3+3\ep}_g}
{(4rst)^{\frac{1+\ep}{\ell}}t^{1-\frac{3(1+\ep)}{\ell}}}
\end{align}
i.e.,
\begin{align}\label{Xg<2}
X^{\ell-3(1+\ep)}_g<\ka_\ep \frac{N(rst)^{(1+\ep)(1-\frac{1}{\ell})}}
{4^{\frac{1+\ep}{\ell}}t^{1-\frac{3(1+\ep)}{\ell}}}=\ka_\ep \frac{N(\frac{uvwA_fA_gA_h}{G^3})^{(1+\ep)(1-\frac{1}{\ell})}}
{4^{\frac{1+\ep}{\ell}}(\frac{wA_g}{G})^{1-\frac{3(1+\ep)}{\ell}}}.
\end{align}
Observe that
\begin{align*}
\frac{N(rst)^{(1+\ep)(1-\frac{1}{\ell})}}
{4^{\frac{1+\ep}{\ell}}t^{1-\frac{3(1+\ep)}{\ell}}}\leq
\frac{N(rs)^{(1+\ep)(1-\frac{1}{\ell})}N(t)^{\ep+\frac{2(1+\ep)}{\ell}}}
{4^{\frac{1+\ep}{\ell}}}.
\end{align*}
Hence we also have from \eqref{Xg<2} that
\begin{align}\label{Xg<3}
X^{\ell-3(1+\ep)}_g<\ka_\ep \frac{N(\frac{uvA_fA_h}{G^2})^{(1+\ep)(1-\frac{1}{\ell})}
N(\frac{wA_g}{G})^{\ep+\frac{2(1+\ep)}{\ell}}}{4^{\frac{1+\ep}{\ell}}}.
\end{align}

\begin{lem}\label{B0<B1}
Let $\ell\ge 11$. Let $S_0=\{A_0, A_1, \ldots , A_{k-1}\}=\{B_0, B_1, \ldots, B_{k-1}\}$
with $B_0\leq B_1\leq \ldots \leq B_{k-1}$. Then
\begin{align*}
B_0\leq B_1<B_2\ldots <B_{k-1}.
\end{align*}
In particular $|S_0|\ge k-1$.
\end{lem}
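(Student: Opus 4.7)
The plan is to prove Lemma \ref{B0<B1} by contradiction, reducing both possible failure modes to the explicit abc-type bound \eqref{Xg<2} taken at $\epsilon=3/4$, where $\kappa_\epsilon=N_\epsilon=1$ holds unconditionally. If the conclusion fails then either (a) three of the $A_i$ coincide, or (b) two disjoint index pairs carry equal but mutually distinct $A$-values. In case (a), suppose $A_{i_1}=A_{i_2}=A_{i_3}=A$ with $i_1<i_2<i_3$ and plug $(f,g,h)=(i_1,i_2,i_3)$ into \eqref{fgh}--\eqref{rst}. Since $\gcd(u,v)=1$ combined with $w=u+v$ forces $u,v,w$ to be pairwise coprime, $G=\gcd(wA,uA,vA)$ collapses to $A$, and \eqref{rst} becomes $wX_{i_2}^\ell=uX_{i_1}^\ell+vX_{i_3}^\ell$ with $1\le u,v,w<k$. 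From $X_{i_2}>X_{i_1}\ge 1$ we have $X_{i_2}\ge 2$, and $\gcd(X_{i_2},\prod_{p\le k}p)=1$ then forces $X_{i_2}>k$. Feeding $N(rst)\le uvw<k^3$ and $t=w\ge 1$ into \eqref{Xg<2} yields $X_{i_2}^{\ell-21/4}<k^{(21/4)(1-1/\ell)}$; combined with $X_{i_2}>k$ this reduces to $4\ell^2-42\ell+21<0$, whose larger root $(42+\sqrt{1428})/8<10$ contradicts $\ell\ge 11$.

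In case (b), write $A_{i_1}=A_{i_2}=a$ and $A_{j_1}=A_{j_2}=b$ with $a\ne b$. Eliminating $d$ between the pair relations $a(X_{i_2}^\ell-X_{i_1}^\ell)=(i_2-i_1)d$ and $b(X_{j_2}^\ell-X_{j_1}^\ell)=(j_2-j_1)d$ (using $n=aX_{i_1}^\ell-i_1 d$) yields the three-term relation
\begin{align*}
pbX_{j_1}^\ell=qaX_{i_2}^\ell+a(p-q)X_{i_1}^\ell, \qquad p=i_2-i_1,\ q=j_1-i_1.
\end{align*}
The cleanest subcase is $X_{i_1}=X_{j_1}=1$: here $a=n+i_1d$ and $b=n+j_1d$ give $a-b=(i_1-j_1)d$, but $|a-b|<k$ while pair 1 forces $d\ge a((k+1)^\ell-1)/(k-1)$, which for $\ell\ge 11$ exceeds $k^9$; so $i_1=j_1$, contradicting the disjointness of the pairs. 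In the remaining subcases (at least one of $X_{i_1},X_{j_1}$ exceeds $k$) I apply Theorem \ref{abcexplicit} to the displayed three-term equation after clearing its common factor $G$, exploiting the coprimalities $\gcd(a,X_{j_1})=\gcd(b,X_{i_2})=1$ and the fact that the relevant $X$-variable on the left is forced to exceed $k$.

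The main obstacle is closing case (b) outside its clean subcase sharply at $\ell=11$: the common factor $G$ of the abc equation absorbs at best $\gcd(a,b)$ together with a piece of $\gcd(p,q)$, so the radical $N(rst)$ can grow by an additional factor up to $k^2$ compared with case (a), tightening the final polynomial inequality in $\ell$. Pushing the argument through requires both the denominator gain $t^{1-3(1+\epsilon)/\ell}$ inside \eqref{Xg<2} and a case-split on the four possible orderings of $i_1,i_2,j_1,j_2$, so that in each configuration the central $X_g$ is forced to exceed $k$ while the abc bound pins it below a strictly smaller power of $k$.
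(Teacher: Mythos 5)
Your case split $\mathrm{(a)}\cup\mathrm{(b)}$ is the negation of the weaker byproduct $|S_0|\ge k-1$, not of the lemma's actual conclusion $B_0\le B_1<B_2<\ldots<B_{k-1}$. The conclusion also forbids a \emph{single} repeated pair whose common value is not the minimum: for instance $B_0<B_1=B_2<B_3<\cdots$ has no triple coincidence and no second repeated value, yet it violates the chain. Equivalently, the correct negation is: there exist $i_1\neq i_2$ with $A_{i_1}=A_{i_2}=A$ and some $i_3\notin\{i_1,i_2\}$ with $A_{i_3}\le A$ (possibly $A_{i_3}<A$ and $A_{i_3}$ unique). Your (a) is the sub-case $A_{i_3}=A$, and your (b) is the sub-case where $A_{i_3}$ is itself repeated; the sub-case $A_{i_3}<A$ unique is simply not addressed. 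On top of this, you acknowledge yourself that (b) does not close outside its ``clean'' sub-case, so even $|S_0|\ge k-1$ is not actually established.

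The paper treats the full negation in one stroke, and the key observation you only exploit in (a) in fact works throughout. With $\{f,g,h\}=\{i_1,i_2,i_3\}$ as above, two of $A_f,A_g,A_h$ equal $A=\max(A_f,A_g,A_h)$; reading \eqref{uvw} modulo $A$ and using $\gcd(A,X_i)=1$ shows that $A$ divides the remaining term as well, hence $A\mid G$ and therefore $\max(A_f,A_g,A_h)\le G$. This gives $r\le u<k$, $s\le v<k$, $t\le w<k$ without needing $G$ to equal $A$ exactly, and without needing all three $A$'s equal. Plugging these, together with $X_g>k$, into the first inequality of \eqref{Xg<2} at $\ep=\tfrac34$ gives $k^{\ell-21/4}<(rs)^{(1+\ep)(1-1/\ell)}t^{\ep+2(1+\ep)/\ell}<k^{2+3\ep}$, i.e.\ $\ell<5+6\ep=9.5$, contradicting $\ell\ge 11$. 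This single computation subsumes your (a), your (b) (just take three of the four indices with the pair carrying the larger common value together with one index from the other pair), and the missing third case. Your (a) computation is correct as far as it goes; the crude bound $N(rst)\le k^3$ costs a little sharpness compared with the paper's $k^{2+3\ep}$ but still rules out $\ell\ge 10$. The real gaps are the incomplete negation and the unfinished case (b), both of which disappear once you notice $A\mid G$ holds whenever merely two of the three chosen $A$'s attain the maximum.
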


\begin{proof}
Suppose there exists $0\le f<g<h<k$ with $\{f, g, h\}=\{i_1, i_2, i_3\}$ and
\begin{align*}
A_{i_1}=A_{i_2}=A \ {\rm and} \ A_{i_3}\leq A.
\end{align*}
By \eqref{uvw} and \eqref{G}, we see that max$(A_f, A_g, A_h)\le G$ and therefore
$r\le u<k, s\leq v<k$ and $t\leq w<k$. Since $X_g>k$, we get from the first inequality of
\eqref{Xg<2} with $\ep=\frac{3}{4}, N_\ep=\ka_\ep=1$ that
\begin{align*}
k^{\ell-3(1+\ep)}<(rs)^{(1+\ep)(1-\frac{1}{\ell})}t^{\ep+\frac{2(1+\ep)}{\ell}}<k^{2+3\ep}
\end{align*}
implying $\ell <5+6\ep=5+\frac{9}{2}$. This is a contradiction since $\ell\ge 11$. Therefore
either $A_i$'s are distinct or if $A_i=A_j=A$, then $A_m>A$ for $m\notin \{i, j\}$ implying
the assertion.
\end{proof}

As a consequence, we have

\begin{coro}\label{d-odd}
Let $d$ be even and $\ell \ge 11$. Then $k\le 13$.
\end{coro}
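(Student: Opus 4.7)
The strategy is to combine the parity constraint from $d$ being even with Lemma \ref{B0<B1}, then play this against the Sylvester--Erd\H{o}s divisibility \eqref{erd} and, where needed, the explicit--$abc$ inequality \eqref{Xg<3}. I argue by contradiction, assuming $d$ is even, $\ell \ge 11$, and $k \ge 14$.

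Since $d$ is even and $\gcd(n,d) = 1$, $n$ is odd and every $n + id$ is odd, so each $A_i$ is an odd positive integer coprime to $d$. Lemma \ref{B0<B1} (applicable because $\ell \ge 11$) gives $|S_0| \ge k - 1$, and since $2 \mid d$ excludes $2$ from the primes to be discarded, we get $|S'| \ge k - \pi(k)$. By \eqref{erd}, the product $\prod_{A_i \in S'} A_i$ divides $(k-1)!$ with all prime factors of $d$ stripped out; in particular, it divides the odd part of $(k-1)!$. On the other hand, distinct positive odd integers give the lower bound $\prod_{A_i \in S'} A_i \ge \prod_{j=1}^{|S'|}(2j-1)$. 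Applying Lemma \ref{pnt}(vi) to both sides should rule out all but a bounded range of $k$.

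For the range where this counting is not yet sharp, and in particular for $d = 2$ where no odd primes of $d$ are removed, I would switch to the $abc$ route. Corollary \ref{3<4k} (for $k > 113$) furnishes a triple $f < g < h$ with $h - f \le 8$ and $\max(A_f, A_g, A_h) \le 4k$, so in the notation of \eqref{uvw} and \eqref{G} we have $u, v, w \le 8$, $\gcd(u, v) = 1$, and $G$ odd (because each $A_f, A_g, A_h$ is odd). Substituting into \eqref{Xg<3} with $\ell \ge 11$ and a suitably small $\epsilon$ produces an upper bound of the form $X_g \le C k^{c}$ with $c < 1$, which together with $X_g > k$ (valid whenever $X_g > 1$, since all prime factors of $X_g$ exceed $k$) forces a contradiction for $k$ above a small threshold.

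I expect two main obstacles. First, reaching the sharp bound $k \le 13$, rather than a looser value (the pure counting on its own only eliminates roughly $k \ge 23$, and the $abc$ route activates only above the threshold of Corollary \ref{3<4k}) requires a finite case check in the intermediate range $14 \le k \le 113$, handled by enumerating odd sequences $A_0, \ldots, A_{k-1}$ compatible with the AP structure and the smoothness constraint $P(A_i) \le k$. Second, one must rule out the degenerate configuration $X_g = 1$, where the crucial estimate $X_g > k$ breaks down; this should be handled by shifting to another triple, using that Shorey--Tijdeman forces $X_i > 1$ for at least one $i$ (indeed $P(y) > k$), possibly at the cost of slightly enlarging the window $h - f$.
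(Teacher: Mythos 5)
Your first paragraph is the paper's proof, and it already suffices. Since $d$ is even and $\gcd(n,d)=1$, every $A_i$ is odd; Lemma \ref{B0<B1} (which needs $\ell\ge 11$) gives $|S_0|\ge k-1$ distinct values, so $|S'|\ge k-1-\pi(k)$ (in fact $\ge k-\pi(k)$, since $2\mid d$ means the prime $2$ is never among the removed indices); and \eqref{erd} says $\prod_{A_i\in S'}A_i$ divides the odd part $\prod_{2i+1\le k-1}(2i+1)$ of $(k-1)!$. Comparing with the lower bound $\prod_{i=1}^{|S'|}(2i-1)$ for a product of that many distinct positive odd integers, one gets a contradiction as soon as $2|S'|-1>k-1$, which holds for all $k\ge 14$. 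That is the entire proof: a one-line comparison of two products of odd numbers. No Stirling estimate, no $abc$ input, no separate treatment of $d=2$, no finite case check, and no degenerate $X_g=1$ case ever arises.

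Where you go astray is in the assessment "the pure counting on its own only eliminates roughly $k\ge 23$." That estimate seems to come from comparing against $(k-1)!$ rather than against its \emph{odd part}; the whole point of the evenness of $d$ is that the upper bound shrinks to a product of only about $k/2$ odd factors, while the number of required distinct odd factors $k-\pi(k)$ exceeds that already at $k=14$. Your proposed fallback would also not do what you want: Corollary \ref{3<4k} and the route through \eqref{Xg<3} are only available for $k>113$, so the "intermediate range" $14\le k\le 113$ would be left to a brute-force enumeration that is substantially harder than the corollary it is meant to prove. Drop the second half of the plan, tighten the count to $|S'|\ge k-\pi(k)$ to cover $k=14$, and you have the paper's argument exactly.
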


\begin{proof}
Let $d$ be even and $\ell\ge 11$. Then we get from \eqref{erd} with $S=S_0$ that
\begin{align*}
\prod_{A_i\in S'}A_i\leq (k-1)!2^{{\rm ord}_2((k-1)!)}=\prod_{2i+1\le k-1}(2i+1).
\end{align*}
On the other hand, since gcd$(n, d)=1$, we see that all $A_i$'s are odd and
$|S'|\geq |S_0|-\pi(k)\ge k-1-\pi(k)$ by Lemma \ref{B0<B1}. Hence
\begin{align*}
\prod_{A_i\in S'}A_i\geq \prod^{k-1-\pi(k)}_{i=1}(2i-1).
\end{align*}
This is a contradiction since $2(k-1-\pi(k))>k-1$ for $k\ge 14$.
\end{proof}

\begin{lem}\label{k<89}
Let $\ell \ge 11$. Then $k<400$.
\end{lem}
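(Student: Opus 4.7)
My plan is to derive a contradiction from $k \ge 400$ using the abc-inequality \eqref{Xg<3}. By Corollary \ref{d-odd} we may assume $d$ is odd; and since $k > 113$, Corollary \ref{3<4k} supplies indices $0 \le f < g < h \le k-1$ with $h - f \le 8$ and $\max(A_f, A_g, A_h) \le 4k$. Setting $\lambda = \gcd(h-f, h-g, g-f)$ and $(u, v, w) = ((h-g)/\lambda, (g-f)/\lambda, (h-f)/\lambda)$ as in \eqref{uvw}, these are pairwise coprime positive integers with $u + v = w \le 8$, so in particular $uvw$ is bounded by an absolute constant.

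I would then apply \eqref{Xg<3} with $\ep = 3/4$ (hence $\ka_\ep = N_\ep = 1$). The crucial observation is that every prime divisor of $X_g$ exceeds $k$, so either $X_g = 1$ or $X_g > k$; the degenerate case $X_g = 1$ is avoided by choosing the triple produced by Corollary \ref{3<4k} with $g$ in the upper half of $[0, k-1]$, which is possible because the proof of that corollary actually produces strictly more than $2([k/9]+1)$ indices with $A_i \le 4k$. Substituting the elementary radical bounds $N(uvA_fA_h/G^2) \le uvA_fA_h \le C_1 k^2$ and $N(wA_g/G) \le wA_g \le C_2 k$ into \eqref{Xg<3} for $\ell \ge 11, \ep = 3/4$, and computing the exponents $(1+\ep)(1-1/\ell) = 35/22$ and $\ep + 2(1+\ep)/\ell = 47/44$ at $\ell = 11$, produces an inequality of the shape $X_g^{23/4} < C \cdot k^{187/44}$ for an absolute constant $C$. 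Combined with $X_g > k$ this collapses to $k^{3/2} < C$.

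The main obstacle is making $C$ small enough to force $k < 400$, i.e.\ $C < 400^{3/2} = 8000$; naive application of the radical bounds above inflates $C$ by roughly an order of magnitude. The sharpenings I would pursue are: (i) case-by-case treatment of the finitely many $(u,v,w)$ with $u+v=w$, $\gcd(u,v)=1$ and $w \le 8$, in each of which $uvw$ is far below the worst-case $120$; (ii) the parity of $d$ together with $\gcd(n,d)=1$, which forces the prime $2$ to divide at most two of $A_f, A_g, A_h$ in the length-$8$ window $[f,h]$, so the factor of $2$ can be removed from one of the radicals; and (iii) careful tracking of the common divisor $G$, which further reduces the radicals $N(uvA_fA_h/G^2)$ and $N(wA_g/G)$. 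The cases $\ell \ge 13$ are easier because they give a strictly larger LHS exponent $\ell - 21/4$ in \eqref{Xg<3}, so it suffices to handle $\ell = 11$.
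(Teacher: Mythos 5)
Your skeleton matches the paper's: eliminate even $d$ via Corollary \ref{d-odd}, use Corollary \ref{3<4k} to produce a tight triple $f<g<h$ with $h-f\le 8$ and $\max(A_f,A_g,A_h)\le 4k$, and feed the resulting $(u,v,w)$-identity into the explicit $abc$-inequality with $\ep=\tfrac34$ to force $k^{3/2}<C$. But the obstacle you flag is a genuine gap, not a computation to be tidied up. Taking $\ka_{3/4}=N_{3/4}=1$ — i.e.\ using only the unconditional bound $c<N^{7/4}$ of \eqref{3/4} — loses a factor of roughly $8$ in the constant. Even the paper's sharper combined radical bound $N(uvwA_fA_gA_h)\le 35\,(4k)^3$ (and certainly the cruder separate bounds $uvA_fA_h\le 240k^2$, $wA_g\le 32k$ that you plug into \eqref{Xg<3}) still gives $k^{3/2}$ bounded by something in the range $5.6\cdot10^4$ to $2\cdot10^5$, nowhere near $400^{3/2}=8000$. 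Your proposed sharpenings (i)--(iii) only shave small factors off $N(uvw)$ or exploit $G>1$; none can supply the missing order of magnitude.

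The ingredient you are missing is the refined constant $\ka_{3/4}=\frac{6}{5\sqrt{28\pi}}\approx0.128$ in Theorem \ref{abcexplicit}, which is available once one knows $N=N(rstX_fX_gX_h)\ge N_{3/4}=e^{37.1101}$. The paper secures this by a preliminary dichotomy: if $N<e^{37.12}$, then by \eqref{abc0} with $\ka_\ep=N_\ep=1$ one has $400^{11}\le k^{11}\le tX_g^\ell<N^{7/4}\le e^{37.12\cdot 7/4}$, already a contradiction; hence $N\ge e^{37.12}\ge N_{3/4}$ and $\ka_{3/4}$ may be used. The paper then applies \eqref{abc3} (with the geometric mean $X=(X_fX_gX_h)^{1/3}>k$, rather than \eqref{Xg<3} with $X_g$ alone) together with the parity bound $N(uvwA_fA_gA_h)\le 35\,A_fA_gA_h$: since $uvw$ is always even, either $A_fA_gA_h$ is odd, forcing $h-f,g-f,h-g$ all even and hence $N(uvw)\le 6$, or $A_fA_gA_h$ is even and only the odd part of $uvw$ contributes, with $N((uvw)')\le 35$ (attained at $(u,v,w)=(2,5,7)$). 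This yields $k^{3/2}<\ka_{3/4}\cdot(35\cdot64)^{17/12}\approx7.1\cdot10^3<8000$, giving $k<400$. Add the $N\ge N_{3/4}$ step and switch to \eqref{abc3} with the $35$-bound, and your argument will close. As a side remark, your device of pushing $g$ into the upper half of $[0,k-1]$ to avoid $X_g=1$ is unnecessary: the paper simply observes that $n+(k-1)d>k^\ell$ forces $X_f,X_g,X_h>k$ for the chosen tight triple once $k\ge 400$ and $\ell\ge 11$.
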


\begin{proof}
Assume that $k\ge 400$. By Corollary \ref{d-odd}, we may suppose that $d$ is odd.
Further by Corollary \ref{3<4k}, there exists $f<g<h$ with $h-f\le 8$ and
max$(A_f, A_g, A_h)\leq 4k$. Since $n+(k-1)d>k^\ell$, we observe that $X_f>k, X_g>k, X_h>k$
implying $X>k$. First assume that $N=N(rstX_fX_gX_h)<e^{37.12}$. Then taking
$\ep=\frac{3}{4}, N_\ep=1$ in \eqref{abc0}, we get
$400^{11}\le k^{11}\leq tX^\ell_g<N^{1+\frac{3}{4}}\le e^{37.12(1+\frac{3}{4})}$ which is a
contradiction. Hence we may suppose that $N\ge e^{37.12}\ge N_{\frac{3}{4}}$.

Note that we have $u+v=w\leq h-f\le 8$. We observe that $uvw$ is even. If $A_fA_gA_h$ is odd,
then $h-f, g-f, h-g$ are all even implying $1\leq u, v, w\leq 4$ or $N(uvw)\leq 6$ giving
$N(uvwA_fA_gA_h)\leq 6A_fA_gA_h$. Again if $A_fA_gA_h$ is even, then
$N(uvwA_fA_gA_h)\leq N((uvw)')A_fA_gA_h \leq 35A_fA_gA_h$ where $(uvw)'$ is the
odd part of $uvw$ and $N((uvw)')\leq 35$. Observe that $N((uvw)')$ is obtained when
$w=7, u=2, v=5$ or $w=7, u=5, v=2$. Thus we always have
$N(uvwA_fA_gA_h)\leq 35A_fA_gA_h\le 35\cdot (4k)^3$ since max$(A_f, A_g, A_h)\le 4k$.
Therefore taking $\ep=\frac{3}{4}$ in \eqref{abc3}, we obtain using $\ell\ge 11$ and $X>k$ that
\begin{align*}
k^{11-3(1+\frac{3}{4})}<\frac{6}{5\sqrt{28\pi}}35^{\frac{2}{3}+\frac{3}{4}}
(4k)^{3(\frac{2}{3}+\frac{3}{4})}.
\end{align*}
This is a contradiction since $k\ge 400$.  Hence the assertion.
\end{proof}

\section{Proof of Theorem \ref{ABCErdos} for $4\leq k<400$}

We assume that $\ell \ge 11$. It follows from the result of Saradha and
Shorey \cite[Theorem 1]{Contr} that $d>10^{15}$. Hence we may suppose
that $d>10^{15}$ in this section.

\begin{lem}\label{rk}
Let $r_k=[k+1-\pi(k)-\frac{\sum_{i\le k}\log i}{15\log 10}]$ and
\begin{align*}
I(k)=\{i\in [1, k]: P(n+id)>k\}.
\end{align*}
Then $|I(k)|\ge r_k$.
\end{lem}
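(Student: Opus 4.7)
The plan is to bound the complementary set $J = \{i \in [1,k] : P(n+id) \le k\}$ from above and conclude via $|I(k)| = k - |J|$.

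The key structural observation is that for each $i \in J$ the condition $P(n+id) \le k$ forces $X_i = 1$ in the factorization $n+id = A_i X_i^\ell$, so $n+id = A_i$. In particular the $A_i$ with $i \in J$ are pairwise distinct, and since $d > 10^{15}$ by the assumption recorded at the start of this section and $i \ge 1$,
\[
A_i = n + id > d > 10^{15}.
\]
This coupling of the lower bound on $d$ with the $k$-smoothness of $n+id$ is the driving idea; it is what makes the Sylvester-Erd\H{o}s divisibility actually constrain the size of $J$.

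Next I would apply \eqref{erd} to $S = \{A_i : i \in J\}$. It produces a subset $S' \subseteq S$ with $|S'| \ge |S| - \pi_d(k) \ge |J| - \pi(k)$ whose product divides $(k-1)!\prod_{p \mid d} p^{-{\rm ord}_p((k-1)!)}$, so in particular
\[
\prod_{A_i \in S'} A_i \le (k-1)! \le k!.
\]
Because the $|S'|$ factors on the left are distinct integers each exceeding $10^{15}$, we get the strict inequality $10^{15\,|S'|} < k!$, which after taking logarithms reads
\[
|S'| < \frac{\sum_{i \le k}\log i}{15 \log 10}.
\]
Combining these bounds,
\[
|I(k)| \;=\; k - |J| \;\ge\; k - \pi(k) - |S'| \;>\; k - \pi(k) - \frac{\sum_{i \le k}\log i}{15\log 10}.
\]

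Finally, since $|I(k)|$ is an integer and the right-hand side is exactly one less than $k+1-\pi(k) - \frac{\sum_{i \le k}\log i}{15\log 10}$, the strict inequality forces $|I(k)| \ge r_k$, as required. The only substantive step is the size bound $A_i > 10^{15}$ obtained from $d > 10^{15}$ and $i \ge 1$; the rest is the Sylvester-Erd\H{o}s inequality \eqref{erd} packaged with an elementary logarithm, and I do not anticipate any serious obstacle beyond verifying the off-by-one carefully in the last step.
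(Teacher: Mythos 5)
Your proof is correct and follows essentially the same route as the paper: pass to the complementary set of indices $i$ with $P(n+id)\le k$ (so $n+id=A_i$), note each such $A_i>d>10^{15}$ since $d>10^{15}$ is available in this section, apply the Sylvester--Erd\H{o}s divisibility \eqref{erd} to bound the product of a co-$\pi(k)$-sized subset by $(k-1)!$, and take logarithms. The paper phrases it as a contradiction from $|I(k)|\le r_k-1$ while you argue directly and spell out the floor-function step at the end, but these are presentational differences only.
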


\begin{proof}
Suppose not. Then $|I(k)|\leq r_k-1$. Let
\begin{align*}
I'(k)=\{i\in [1, k]: P(n+id)\le k\}=\{i\in [1, k]: n+id=A_i\}.
\end{align*}
We have $A_i=n+id\ge (n+d)$ for $i\in I'(k)$. Let $S=\{A_i: i\in I'(k)\}$. Then
$|S|\geq k+1-r_k$. From \eqref{erd}, we get
\begin{align*}
(k-1)!\ge \prod_{A_i\in S'}A_i\ge (n+d)^{|S'|}>d^{k+1-r_k-\pi(k)}.
\end{align*}
Since $d>10^{15}$, we get
\begin{align*}
k+1-\pi(k)-\frac{\sum_{i\le k}\log i}{15\log 10}<r_k=[k+1-\pi(k)-\frac{\sum_{i\le k}\log i}{15\log 10}].
\end{align*}
This is a contradiction.
\end{proof}

Here are some values of $(k, r_k)$.
\begin{center}
\begin{tabular}{|c|c|c|c|c|c|c|c|c|} \hline
$k$ & $7$ & $11$ & $13$ & $17$ & $18$ & $28$ & $30$ & $36$ \\ \hline
$r_k$ & $3$ & $6$ & $7$ & $10$ & $10$ & $18$ & $18$ & $23$ \\ \hline
\end{tabular}
\end{center}

We give the strategy here. Let $I_k=[0, k-1]\cap \Z$ and $a_0, b_0, z_0$ be given.
Let obtain a subset $I_0\subseteq I_k$ with the following properties:
\begin{enumerate}
\item $|I_0|\ge z_0\ge 3$.
\item $P(A_i)\leq a_0$ for $i\in I_0$.
\item $I_0\subseteq [j_0, j_0+b_0-1]$ for some $j_0$.
\item $X_0=\max_{i\in I_0}\{X_i\}>k$ and let $i_0\in I_0$ be such that $X_0=X_{i_0}$.
\end{enumerate}
For any $i, j\in I_0$, taking $\{f, g, h\}=\{i, j, i_0\}$, let $N=N(rstX_fX_gX_h)$.
Observe that $X_0\ge p_{\pi(k)+1}$ and further for any $f, g, h\in I_0$, we have
$N(uvw)\leq \prod_{p\le b_0-1}p$ and $N(A_fA_gA_h)\le \prod_{p\le a_0}p$. We will always
take $\ep=\frac{3}{4}, N_\ep=1$ so that $\ka_\ep=1$ in \eqref{abc0} to \eqref{Xg<3}.

\noindent
{\bf Case I:} Suppose there exists $i, j\in I_0$ such that $X_i=X_j=1$.
Taking $\{f, g, h\}=\{i, j, i_0\}$ and $\ep=\frac{3}{4}$, we obtain from \eqref{abc1}
and $\ell\geq 11$ that
\begin{align}\label{CaseI}
p^{\frac{37}{7}}_{\pi(k)+1}\leq X^{\frac{\ell}{1+\frac{3}{4}}-1}_0<N(uvwA_fA_gA_h)\leq \prod_{p\le \max\{a_0, b_0-1\}}p.
\end{align}

\noindent
{\bf Case II:} There is at most one $i\in I_0$ such that $X_i=1$. Then
$|\{i\in I_0: X_i>k\}|\ge z_0-1$. We take $a_1, b_1, z_1$ and find a subset
$U_0\subset I_0$ with the following properties:

\begin{enumerate}
\item $|U_0|\ge z_1\ge 3$, $\frac{z_0}{2}\leq z_1\le z_0$.
\item $P(A_i)\leq a_1$ for $i\in U_0$.
\item $U_0\subseteq [i, i+b_1-1]$ for some $i$.
\end{enumerate}
Let $X_1=\max_{i\in U_0}\{X_i\}\ge p_{\pi(k)+z_1-1}$ and $i_1$ be such that $X_{i_1}=X_1$.
Taking $\{f, g, h\}=\{i, j, i_1\}$ for some $i, j\in U_0$ and $\ep=\frac{3}{4}$, we obtain
from \eqref{Xg<2} and $\ell\ge 11$ that
\begin{align}\label{CaseII}
p^{\frac{23}{7}}_{\pi(k)+z_1-1}\leq X^{\frac{\ell}{1+\frac{3}{4}}-3}_0<N(uvwA_fA_gA_h)\leq
\prod_{p\le \max\{a_1, b_1-1\}}p
\end{align}
since $\ell\ge 11$. One choice is $(U_0, a_1, b_1, z_1)=(I_0, a_0, b_0, z_0)$. We state the other choice.

Let $b'=\max(a_0, b_0-1)$. For each $\frac{b_0}{2}-1<p\leq b'-1$, we remove those $i\in I_0$ such
that $p|(n+id)$. There are at most $2(\pi(b'-1)-\pi(\frac{b_0}{2}-1))$ such $i$. Let
$I'_0$ be obtained from $I_0$ after deleting those $i$'s. Then
$|I'_0|\ge z_0-2(\pi(b'-1)-\pi(\frac{b_0}{2}-1))$. Let
\begin{align*}
U_1=I'_0\cap [j_0, j_0+\frac{b_0}{2}-1] \ \ {\rm  and} \ \ U_1=I'_0\cap [j_0+\frac{b_0}{2}, j_0+b_0-1].
\end{align*}
Let $U_0\in \{U_1, U_2\}$ for which $|U_i|=\max(|U_1|, |U_2|)$ and choose one of them if $|U_1|=|U_2|$.
Then $|U_0|\ge \lceil \frac{z_0}{2}\rceil -\pi(b'-1)+\pi(\frac{b_0}{2}-1)=z_1$. Further
$P(A_i)\leq \frac{b_0}{2}-1=a_1$ and $b_1=\frac{b_0}{2}$. Further
$X_1=\max_{i\in U_0}\{X_i\}\ge p_{\pi(k)+z_1-1}$. Our choice of $z_0, a_0, b_0$ will imply that
$z_1\ge 3$.

\section*{4.1. $k\in \{4, 5, 7, 11\}$} We take $I_0=U_0=I_k, a_i=b_i=z_i=k$ for $i\in \{0, 1\}$ and hence
$N(uvwA_fA_gA_h)\leq \prod_{p\le k}p$. And the assertion follows since both \eqref{CaseI} and \eqref{CaseII} are contradicted.

\section*{4.2. $k\in \{13, 17, 19, 23\}$} We take
$I_0=\{i\in [1, 11]: p\nmid (n+id) \ {\rm for} \ 13\leq p\le 23\}$. Then by $r_{11}=6$ and
Lemma \ref{rk} with $k=11$, we see that $|I_0|\ge z_0=11-4>11-r_{11}\ge 11- |I(11)|$. Therefore
there exist an $i\in I_0\cap I_{11}$ and hence $X_i>23$. We take $U_0=I_0$, $a_i=b_i=11, z_1=z_0$ for
$i\in \{0, 1\}$ and hence $N(uvwA_fA_gA_h)\leq \prod_{p\le 11}p$. And the assertion follows
since both \eqref{CaseI} and \eqref{CaseII} are contradicted.

\section*{4.3. $29\le k\le 47$} We take $I_0=\{i\in [1, 17]: p\nmid (n+id) \ {\rm for} \ 17\leq p\le k\}$.
Then by $r_{17}=10$ and Lemma \ref{rk} with $k=17$, we have
$|I_0|\ge z_0=17-(\pi(k)-\pi(13))=23-\pi(k)\ge 23-\pi(47)=8>17-r_{17}\ge 17-|I(17)|$ implying that
there exists $i\in I_0$ with $X_i>k$. We take $a_i=13, b_i=17, z_i=23-\pi(k)$ for $i\in \{0, 1\}$ and hence $N(uvwA_fA_gA_h)\leq \prod_{p\le 13}p$. And the assertion follows since both \eqref{CaseI} and
\eqref{CaseII} are contradicted.

\section*{4.4. $k\ge 53$}

Given $m$ and $q$ such that $mq<k$, we consider the $q$ intervals
\begin{align*}
I_j=[(j-1)m+1, jm]\cap \Z \ {\rm for} \ 1\le j\le q
\end{align*}
and let $I'=\displaystyle{\cup^q_{j=1}}I_j$ and $I^{"}=\{i\in I': m\le P(A_i)\le k\}$.
There is at most one $i\in I'$ such that $mq-1<P(A_i)\le k$ and for each $2\le j\le q$, there are at most
$j$ number of $i\in I'$ such that $\frac{mq-1}{j}<P(A_i)\le \frac{mq-1}{j-1}$. Therefore
\begin{align*}
|I^{"}|&\le \pi(k)-\pi(mq-1)+\sum^q_{j=2}j\left(\pi(\frac{mq-1}{j-1})-\pi(\frac{mq-1}{j})\right)\\
&=\pi(k)+\sum^{q-1}_{j=1}\pi(\frac{mq-1}{j})-q\pi(m-1)=:T(k, m, q).
\end{align*}
Hence there is at least one $j$ such that $|I_j\cap I^{"}|\le [\frac{T(k, m, q)}{q}]$. We will choose $q$
such that $[\frac{T(k, m, q)}{q}]<r_m$. Let $I_0=I_j\setminus I^{"}$ and let $j_0$ be such that
$I_0\subseteq [(j_0-1)m+1, j_0m]$. Then $p|(n+id)$ imply $p<m$ or $p>k$ whenever $i\in I_0$. Further
$|I_0|\ge z_0=m-[\frac{T(k, m, q}{q}]$. Since $[\frac{T(k, m, q)}{q}]<r_m$, we get from
Lemma \ref{rk} with $k=m$ and $n=(j_0-1)m$ that there is an $i\in I_0$ with $X_i>k$. Further
$P(A_i)<m$ for all $i\in I_0$. Here are the choices of $m$ and $q$.
\begin{center}
\begin{tabular}{|c|c|c|c|c|c|} \hline
$k$ & $53\leq k< 89$ & $89\leq k<179$ & $179\le k<239$ & $239\le k< 367$ & $367\le k<433$ \\ \hline
$(m, q)$ & $(17, 3)$ & $(28, 3)$ & $(36, 5)$ & $(36, 6)$ & $(36, 10)$ \\ \hline
\end{tabular}
\end{center}
We have $a_0=m-1, b_0=m$ and $z_0=m-[\frac{T(k, m, q)}{q}]$ and we check that $z_0\ge 3$.
The Subsection 4.3($29\le k\le 47$) is in fact obtained by considering $m=17, q=1$. Now we
consider Cases I and II and try to get contradiction in both \eqref{CaseI} and \eqref{CaseII}.
For these choices of $(m, q)$, we find that the Cases I are contradicted. Further taking
$U_0=I_0, a_1=a_0=m-1, b_1=b_0=m, z_1=z_0$, we find that Case II is also contradicted for
$53\leq k<89$. Thus the assertion follows in the case $53\leq k<89$. So, we consider
$k\ge 89$ and try to contradict Cases $II$. Recall that we have $X_i>k$ for all but at
most one $i\in I_0$. Write $I_0=U_1\cup U_2$ where $U_1=I_0\cap [(j_0-1)m+1, (j_0-1)m+\frac{m}{2}]$
and $U_2=I_0\cap [(j_0-1)m+\frac{m}{2}+1, j_0m]$. Let $U'_0=U_1$ or $U'_0=U_2$ according as
$|U_1|\ge \frac{z_0}{2}$ or $|U_2|\ge \frac{z_0}{2}$, respectively. Let
$U_0=\{i\in U'_0: p\nmid A_i \ {\rm for} \ \frac{m}{2}\le p<m\}$. Then
$|U_0|\ge z_1:=\frac{z_0}{2}-(\pi(m-1)-\pi(\frac{m}{2}))=\frac{m-[\frac{T(k, m, q)}{q}]}{2}-(\pi(m-1)-\pi(\frac{m}{2}))\ge 3$. Further $p|(n+id)$ with $i\in U_0$ imply
$p<\frac{m}{2}$ or $p>k$. Now we have Case II with $a_1=\frac{m}{2}-1, b_1=\frac{m}{2}$
and find that \eqref{CaseII} is contradicted. Hence the assertion.

\section{$\ell=7$}

Let $\ell=7$. Assume that $k\geq exp(13006.2)$. Taking $\al=3, \be =\frac{1}{15}+\frac{2}{9}$ in
Lemma \ref{S1<k/4}, we get
\begin{align*}
|S_1(3)|=\{i\in [0, k-1]: A_i\leq 3k\}|> k(\frac{1}{15}+\frac{2}{9}).
\end{align*}
For $i$'s such that $A_i\in S_1(3)$, we have $X_i>k$ and we arrange these $X_i$'s in increasing
order as $X_{i_1}<X_{i_2}<\ldots < $. Then $X_{i_j}\ge p_{\pi(k)+j}$. Consider the set
$J_0=\{i: X_i\ge p_{\pi(k)+[\frac{k}{15}]-2}\}$. We have
\begin{align*}
|J_0|>k(\frac{1}{15}+\frac{2}{9})-\frac{k}{15}+2\geq 2\left([\frac{k-1}{9}]+1\right).
\end{align*}
Hence there are $f, g, h\in J_0$, $f<g<h$ such that $h-f\leq 8$. Also $A_i\le 3k$ and
$X=(X_fX_gX_h)^{\frac{1}{3}}\ge p_{\pi(k)+[\frac{k}{15}]-2}$.

First assume that $N=N(rstX_fX_gX_g)\ge exp(63727)\ge N_{\frac{1}{3}}$. Observe that
$uvw\le 70$ since $2\le u+v=w\le 8$, obtained at $2+5=7$. Taking $\ep=\frac{1}{3}$,
we obtain from \eqref{abc3} and max$(A_f, A_g, A_h)\le 3k$ that
\begin{align*}
p^3_{\pi(k)+[\frac{k}{15}]-2}<\frac{5}{6\sqrt{2\pi \cdot 6458}}N(uvwA_fA_gA_h)\leq
\frac{5\cdot 70\cdot (3k)^3}{6\sqrt{12920\pi}}.
\end{align*}
Since $\pi(k)>2$ we have $\pi(k)+[\frac{k}{15}]-2>\frac{k}{15}$ and hence $p_{\pi(k)+[\frac{k}{15}]-2}>
\frac{k}{15}\log \frac{k}{15}$ by Lemma \ref{pnt} $(ii)$. Therefore
\begin{align*}
\left(\log \frac{k}{15}\right)^3<\frac{350\cdot (3\cdot 15)^3}{6\sqrt{12920\pi}} \ {\rm or}
\ k<15\cdot exp\left(45\cdot \left(\frac{350}{6\sqrt{12920\pi}}\right)^{\frac{1}{3}}\right)
\end{align*}
which is a contradiction since $k\ge exp(13006.2)$.

Therefore we have $N=N(rstX_fX_gX_h)<exp(63727)$. We may also assume that $N>exp(3895)$ otherwise
taking $\ep=\frac{3}{4}$ in \eqref{abc0}, we get $k^7<X^7_g<N^{1+\frac{3}{4}}\le exp(3895\cdot \frac{7}{4})$
or $k<exp(\frac{3895}{4})$ which is a contradiction. Now we take $\ep=\frac{5}{12}$ in \eqref{abc0} to get
 $k^7<X^7_g<N^{1+\frac{5}{12}}\le exp(64266\cdot \frac{17}{12})$ or $k<exp(13006.2)$. Hence the assertion.

\section{Nagell-Ljungrenn equation: Proof of Theorem \ref{ABCnalu}}

Let $x>1, y>1, n>2$ and $q>1$ be a non-exceptional solution of \eqref{nalu}.
It was proved by Ljunggren \cite{Lju} that there are no further solutions of \eqref{nalu}
when $q=2$. Thus we may suppose that $q\ge 3$. Further it has been proved that $4\nmid n$ by
Nagell \cite{Nag}, $3\nmid n$ by Ljunggren \cite{Lju} and $5\nmid n, 7\nmid n$ by
Bugeaud, Hanrot and Mignotte \cite{bhm}. Therefore $n\ge 11$.
From \eqref{nalu}, we get
\begin{align*}
1+(x-1)y^q=x^n.
\end{align*}
Then $y<x^{\frac{n}{q}}\le x^{\frac{n}{3}}$ since $q\ge 3$ implying $N=N(x(x-1)y)<x^2y<x^{2+\frac{n}{3}}$.
From \eqref{3/4} in Theorem \ref{abcexplicit}, we obtain
\begin{align*}
x^n<N^{\frac{7}{4}}<x^{\frac{7}{2}+\frac{7n}{12}} \ {\rm implying} \
n<\frac{7}{2}+\frac{7n}{12}.
\end{align*}
This gives $n\le 8$ which is a contradiction.

\section{Fermat-Catalan Equation}

We may assume that each of $p, q, r$  is either $4$ or an odd prime.
Let $[p, q, r]$ denote all permutations of ordered triple $(p, q, r)$. The
Fermat's Last Theorem $(p, p, p)$ was proved by Wiles \cite{wiles};
$[3, p, p], [4, p, p]$ for $p\ge 7$ by Darmon and Merel \cite{dargra} and
$[3, 5, 5], [4, 5, 5]$ by Poonen; $[4, 4, p]$ by Bennett, Ellenberg, Ng \cite{benn}.
The signatures $[3, 3, p]$ for $p\le 10^9$
was solved by Chen and Siksek \cite{chen}, $[3, 4, 5]$ by Siksek and Stoll \cite{siksto}
and $[3, 4, 7]$ by Poonen, Schefer and Stoll \cite{poo}.
Hence we may suppose $(p, q, r)$ is different from those values.

We may assume that $x>1, y>1, z>1$. Then
\begin{align*}
x<z^{\frac{r}{p}}, y<z^{\frac{r}{q}}.
\end{align*}
Given $\ep>0$, by Theorem \ref{abcexplicit}, we have
\begin{align}\label{fceep}
z^r<\begin{cases}
N^{\frac{7}{4}}_\ep & {\rm if} \ N(xyz)<N_\ep\\
N(xyz)^{1+\ep}\leq (xyz)^{1+\ep} & {\rm if} \ N(xyz)\ge N_\ep.
\end{cases}
\end{align}
In particular, taking $\ep=\frac{3}{4}$, we get
\begin{align*}
z^r<(xyz)^{\frac{7}{4}}<z^{\frac{7}{4}(1+\frac{r}{p}+\frac{r}{q})}
\end{align*}
implying
\begin{align}\label{4/7}
\frac{4}{7}<\frac{1}{p}+\frac{1}{q}+\frac{1}{r}.
\end{align}
Thus we need to consider $[3, 3, p]$ for $p>10^9$ and $(p, q, r)\in Q$.
Let $\ep=\frac{34}{71}$. First assume that $N(xyz)\ge N_\ep$. Then
$$z^r<(xyz)^{1+\ep}<z^{(1+\ep)(1+\frac{r}{p}+\frac{r}{q})}$$
implying
\begin{align*}
\frac{1}{p}+\frac{1}{q}+\frac{1}{r}>\frac{1}{1+\ep}=\frac{71}{105}=
\frac{1}{3}+\frac{1}{5}+\frac{1}{7}.
\end{align*}
Therefore we may suppose that $N(xyz)<N_{\frac{34}{71}}$.
Then from \eqref{fceep} that max$(x^p, y^q, z^r)<N^{\frac{7}{4}}_{\frac{34}{71}}\le
e^{1758.3353}$ implying $x, y, z, p, q, r$ are all bounded. This will imply
that $[3, 3, p]$ with $p>10^9$ does not have any solution. Hence the assertion.
$\hfill \qed$

\section{Goormaghtigh Equation}

Let $d=$gcd$(x, y)$. From \eqref{goor}, we have
\begin{align*}
x^{m-1}+\cdots +x=y^{n-1}+\cdots +y
\end{align*}
implying ord$_p(x)=$ord$_p(y)$ for all primes $p|d$. Further
\begin{align*}
\sum^{m-1}_{i=1}(x^i-y^i)=(x-y)\left\{1+\sum^{m-1}_{i=2}\frac{x^i-y^i}{x-y}\right\}=y^{n-1}+\cdots +y^m
\end{align*}
which is
\begin{align*}
1+\sum^{m-1}_{i=2}\frac{x^i-y^i}{x-y}=\frac{y^m}{x-y}\frac{y^{n-m}-1}{y-1}.
\end{align*}
We observe that $d$ is coprime to $\frac{y^{n-m}-1}{y-1}$ and also to the left hand side. Therefore
\begin{align*}
{\rm ord}_p(x-y)=m\cdot {\rm ord}_p(x)=m\cdot {\rm ord}_p(y)=m\cdot {\rm ord}_p(d)
\end{align*}
for every prime $p|d$. Let $d_2=$gcd$(y-1, x-1, x-y)$ and $d_3$ be given by
$x-y=d^md_2d_3$. We observe that $d_2d_3=1$ if $n=m+1$ and $d_2d_3|(y+1)$ if $n=m+2$.
We now rewrite \eqref{goor} as
\begin{align}\label{goo1}
\frac{(y-1)x^m}{d^md_2}+d_3=\frac{(x-1)y^n}{d^md_2}.
\end{align}
Let
\begin{align*}
N=N(\frac{x^my^n(x-1)(y-1)d_3}{d^{2m}d^2_2})\le N(xy(x-1)(y-1)d_3)
\le \frac{xy(x-1)(y-1)d_3}{2^\del dd_2}
\end{align*}
where $\del=0$ if $2|dd_2$ and $1$ otherwise. Recall that $d=$gcd$(x, y)$ and $d_2|(x-1)$.
Let $\ep<\frac{3}{4}$. We obtain from \eqref{goo1} and Theorem \ref{abcexplicit} and
$x-y=d^md_2d_3$ that
\begin{align}\label{2G4}
\max \{\frac{(y-1)x^md_3}{(x-y)}, \frac{(x-1)y^nd_3}{x-y}\}<\begin{cases}
N^{\frac{7}{4}}_{\ep} & {\rm if} \ N<N_\ep\\
N^{1+\ep} & {\rm if} \ N\ge N_{\ep}.
\end{cases}
\end{align}
Assume that $N\ge N_\ep$. Then we obtain using \eqref{2G4} that
\begin{align}
x^m<&x^{2+2\ep}y^{1+2\ep}(x-y)\frac{d^{\ep}_3}{(2^\del dd_2)^{1+\ep}}<x^{4+5\ep}\label{xm} \\
y^n<&x^{1+2\ep}y^{1+\ep}(y-1)^{1+\ep}(x-y)\frac{d^{\ep}_3}{(2^\del dd_2)^{1+\ep}} \label{yn}.
\end{align}
since $y<x$ and $d_3\le x-y<x$. We observe that from \eqref{goor} that $x^{m-1}<2y^{n-1}$ implying
$x<2^{\frac{1}{m-1}}y^{\frac{n-1}{m-1}}$. This together with \eqref{yn}, $d_3\le x-y<x$ and $2^\del dd_2\ge 2$
gives
\begin{align}\label{yn1}
y^n<&2^{\frac{2+3\ep}{m-1}-1-\ep}y^{2+2\ep+\frac{n-1}{m-1}(2+3\ep )}.
\end{align}
From \eqref{xm}, we obtain $m<4+5\ep$ and further from \eqref{yn1},
we get $n<2+2\ep+\frac{n-1}{m-1}(2+3\ep )$ if $m>3$.

Let $\ep=\frac{3}{4}$ and $N_\ep=1$. Then $m\le 7$ and further
$7\le n\le 17$ if $m=6$ and $n\in \{8, 9\}$ if $m=7$. Let $m=7, n=m+1=8$. Then
$d_2d_3=1$ and we get from the first inequality of \eqref{xm} and $y<x$ that
$x^m<x^{4+4\ep}=x^7$ implying $7=m<7$, a contradiction. Let $m=7, n=m+2=9$. Then
$d_2d_3\le y+1$ and we get from \eqref{yn} with $x<2^{\frac{1}{m-1}}y^{\frac{n-1}{m-1}}$,
$d_3(y-1)<y^2$ and $2^\del dd_2\ge 2$ that
$y^n<2^{\frac{2+2\ep}{m-1}-1-\ep}y^{2+3\ep+\frac{n-1}{m-1}(2+2\ep )}<y^9$ which is a
contradiction again. Let $m=6$ and $n\in \{11, 16\}$. From
Nesterenko and Shorey \cite{nest}, we get
$y\le 8, 15$ when $n=11, 16$, respectively. For $2\le y\le 15$ and
$y+1\le x\le (\frac{y^n-1}{y-1}))^{\frac{1}{m-1}}$, we check that \eqref{goor}
does not hold. Therefore $n\notin \{11, 16\}$ when $m=6$.
Hence we have the first assertion of Theorem \ref{ABCgoor}.

Now we take $\ep=\frac{1}{18}$. Since $m\le 7$ and
$G<x$, we get an explicit bound of $x, y, m, n$ from \eqref{2G4} if $N<N_{\frac{1}{18}}$,
implying Theorem \ref{ABCgoor} in that case. Thus we may suppose that
$N\ge N_{\frac{1}{18}}$. Then we obtain from \eqref{xm} with $\ep=\frac{1}{18}$ that
$m<4+5\ep$ implying $m\in \{3, 4\}$ and further from \eqref{yn1} that $n<5$ if $m=4$.
This is a contradiction for $m=4$ since $n>m$ and $n\in \Z$.

Let $m=3$. We rewrite \eqref{goor} as
\begin{align}\label{goo3}
(2x+1)^2=4(y^{n-1}+\cdots +y)+1
\end{align}
By \cite{nest}, we may assume that $n\neq 5$. Let $n=4$ and
denote by $f(y)$ the polynomial on the right hand side of \eqref{goo3}. Let
$f'(\alpha)=0$. Then $\alpha=\frac{-1\pm \sqrt{2}i}{3}$ and we check that
$f(\alpha)\neq 0$. Therefore the roots of $f$ are simple. Now we apply Baker \cite{bakhyp}
to conclude that $y$ and hence $x$ are bounded by  effectively computable absolute
constant. Let $n\ge 6$. Now we rewrite \eqref{goor} as
\begin{align}\label{gom3}
4y^n=(y-1)(2x+1)^2+(3y+1).
\end{align}
Let $G=$gcd$(4y^n, (y-1)(2x+1)^2, 3y+1)$. Then $G=4, 2, 1$ according as
$4|(y-1), 4|(y-3)$ and $2|y$, respectively and we get from \eqref{gom3} that
\begin{align}\label{gom4}
\frac{4}{G}y^n=\frac{y-1}{G}(2x+1)^2+\frac{3y+1}{G}.
\end{align}
Let
\begin{align*}
N=N(\frac{4y(y-1)(2x+1)(3y+1)}{G^3})\le \frac{y(y-1)(2x+1)(3y+1}{G}<
\frac{6xy^3}{G_1}.
\end{align*}
Let $\ep=\frac{1}{12}$. We obtain from Theorem \ref{abcexplicit} with
$\ep=\frac{1}{12}$ that
\begin{align}\label{2m3G1}
\frac{4y^n}{G}<\begin{cases}
N^{\frac{7}{4}}_{\frac{1}{12}} & {\rm if} \ N<N_{\frac{1}{12}}\\
N^{1+\frac{1}{12}} & {\rm if} \ N\ge N_{\frac{1}{12}}.
\end{cases}
\end{align}
If $N<N_{\frac{1}{12}}$, then $y^n<N^{\frac{7}{4}}_{\frac{1}{12}}$ implying the assertion
of Theorem \ref{ABCgoor}. Hence we may suppose that $N\ge N_{\frac{1}{12}}$ and further $y$
is sufficiently large. Then we have from $x^2<2y^{n-1}$ that
\begin{align*}
4y^n<(6\sqrt{2}y^{\frac{n+5}{2}})^{1+\frac{1}{12}}.
\end{align*}
Therefore
\begin{align*}
n-\frac{13(n+5)}{24}<\frac{\frac{13}{12}\log (6\sqrt{2})-\log 4}{\log y}<
\frac{1}{24}
\end{align*}
since $y$ is sufficiently large. This is not possible since $n\ge 6$. Hence the assertion
$\hfill \qed$

\section*{Acknowledgments}

The second author would like to thank ISI, New Delhi where this work was initiated
during his visit in July 2011.

\end{document}